\documentclass[11pt]{amsart}

\usepackage{a4wide}
\usepackage{amssymb}
\usepackage{mathtools}
\usepackage{enumerate}
\usepackage[colorlinks=true, urlcolor=red, linkcolor=red, citecolor=blue]{hyperref}
\usepackage{xcolor}
\usepackage{esint}
\usepackage{mathrsfs}

\theoremstyle{plain}
\newtheorem{theorem}{Theorem}[section]
\newtheorem{lemma}[theorem]{Lemma}
\newtheorem{corollary}[theorem]{Corollary}

\theoremstyle{definition}
\newtheorem{definition}[theorem]{Definition}
\newtheorem{remark}[theorem]{Remark}
\newtheorem{example}[theorem]{Example}

\numberwithin{equation}{section}

\makeatletter
\@namedef{subjclassname@2020}{\textup{2020} Mathematics Subject Classification}
\makeatother

\newcommand{\R}{\mathbb{R}}
\renewcommand{\d}{\,\mathrm{d}}
\DeclareMathOperator*{\supp}{supp}
\DeclareMathOperator*{\esssup}{esssup}
\DeclareMathOperator*{\essinf}{essinf}
\DeclareMathOperator*{\wsup}{w-sup}
\DeclareMathOperator*{\winf}{w-inf}

\title{Capacitary estimates for solutions to nonlocal Dirichlet problems}

\author{Minhyun Kim}
\address{Department of Mathematics \& Research Institute for Natural Sciences, Hanyang University, 04763 Seoul, Republic of Korea}
\email{minhyun@hanyang.ac.kr}
\urladdr{https://sites.google.com/view/minhyunkim/}

\author{Se-Chan Lee}
\address{School of Mathematics, Korea Institute for Advanced Study, 02455 Seoul, Republic of Korea}
\email{sechan@kias.re.kr}
\urladdr{https://sites.google.com/view/sechanlee/}

\author{Marvin Weidner}
\address{Institute for Applied Mathematics, University of Bonn, 53115 Bonn, Germany}
\email{mweidner@ub.edu}
\urladdr{https://sites.google.com/view/marvinweidner/}

\subjclass[2020]{31B25, 31B15, 35R11}
\keywords{boundary regularity,
capacity density condition,
nonlocal equation,
Wiener criterion}
\thanks{}

\begin{document}

\begin{abstract}
We study the boundary regularity of weak solutions to nonlocal nonlinear elliptic equations with bounded measurable coefficients. Our main result establishes that a capacity density condition is equivalent to the validity of a uniform boundary H\"older estimate for solutions with H\"older continuous exterior data. More generally, we derive a fine capacitary estimate on the modulus of continuity that captures how regularity is inherited from the exterior datum to the solution.
\end{abstract}

\maketitle

\section{Introduction}

In this paper, we study the boundary regularity theory for nonlocal nonlinear operators of the form
\begin{equation}\label{eq-L}
\mathcal{L}u(x) = 2\,\mathrm{p.v.} \int_{\mathbb{R}^n} |u(x)-u(y)|^{p-2} (u(x)-u(y)) k(x, y) \,\mathrm{d}y,
\end{equation}
with a measurable kernel $k: \mathbb{R}^n \times \mathbb{R}^n \to \overline{\mathbb{R}}$ satisfying the uniform ellipticity condition
\begin{equation}\label{eq-k}
\frac{\Lambda^{-1}}{|x-y|^{n+sp}} \leq k(x, y) = k(y, x) \leq \frac{\Lambda}{|x-y|^{n+sp}},
\end{equation}
where $n \in \mathbb{N}$, $0<s<1<p<\infty$, and $\Lambda \geq 1$.

Let $\Omega \subsetneq \R^n$ be an open set. Given $x_0 \in \partial \Omega$, $R>0$, and an exterior datum $g$, we consider the localized Dirichlet problem
\begin{equation}\label{eq-loc-DP}
\left\{
\begin{aligned}
\mathcal{L}u &=0 &&\text{in }\Omega \cap B_R(x_0), \\
u &=g &&\text{in } B_R(x_0) \setminus \Omega.
\end{aligned}
\right.
\end{equation}

By the nonlocal De Giorgi--Nash--Moser theory \cite{DCKP14,DCKP16,Kas09}, it is well known that weak solutions $u$ to \eqref{eq-loc-DP} are of class $C^{\alpha}$ in the interior of $\Omega$ for some $\alpha \in (0,1)$, depending only on $n$, $s$, $p$, and $\Lambda$. The goal of this article is to investigate conditions on the domain $\Omega$ and on the exterior datum $g$ under which interior H\"older regularity persists up to the boundary. Specifically, we aim to answer the following central question:
\begin{itemize}
    \item[(i)] Is there a criterion on the domain $\Omega$ that \textit{equivalently characterizes} boundary H\"older continuity of solutions to \eqref{eq-loc-DP} with some exponent $\beta \in (0,\alpha]$?
\end{itemize}

More generally, we are interested in quantifying precisely how regularity is inherited from $g$ to $u$, even if the geometry of $\Omega$ precludes a uniform boundary H\"older estimate:
\begin{itemize}
    \item[(ii)] Can we explicitly quantify the boundary behavior of $u$ in terms of the regularity of $g$ and the geometry of $\Omega$?
\end{itemize}

For the nonlocal nonlinear operator $\mathcal{L}$ given by \eqref{eq-L}--\eqref{eq-k}, previous literature only provides sufficient criteria for (i). For instance, in \cite{KKP16,LZLH20}, it is shown that H\"older continuity at a boundary point $x_0 \in \partial \Omega$ is inherited from $g$ to $u$, provided $\Omega$ satisfies a measure density condition at $x_0$.\footnote{The \emph{measure density condition} holds at $x_0 \in \partial \Omega$ with radius $R>0$ if there exists $\eta \in (0,1)$ such that
\begin{equation}\label{eq-MDC}\tag{MDC}
|\overline{B_r(x_0)} \setminus \Omega |/|B_r(x_0)| \ge \eta \quad\text{for every }0<r\leq R.
\end{equation}} \\

However, as is well known for second-order differential operators \cite{HKM06}, rather than measuring the thickness of $\Omega^c$ near a boundary point in terms of the Lebesgue measure, a much finer quantity is given by the \emph{exterior capacitary thickness} of $\Omega$ at $x_0 \in \partial \Omega$, which is defined as
\begin{equation*}
\Theta_{x_0}(r) \coloneqq \left(\frac{\mathrm{cap}_{s, p}(\overline{B_{r}(x_0)} \setminus \Omega, B_{2r}(x_0))}{r^{n-sp}} \right)^{\frac{1}{p-1}}.
\end{equation*}

The quantity $\Theta_{x_0}(r)$ plays a central role in the recent work \cite{KLL23} (see also \cite{Bjo24,KL23,KLL25}), where a complete nonlocal analog of the celebrated Wiener criterion was established for the nonlocal nonlinear Dirichlet problem \eqref{eq-loc-DP} (with $R=\infty$) in bounded open sets $\Omega$. The main result in \cite{KLL23} states that every solution $u$ with continuous exterior datum is continuous at $x_0 \in \partial \Omega$ if and only if
\begin{align}
\label{eq-Wiener}
    \int_0^1 \Theta_{x_0}(r) \frac{\mathrm{d}r}{r} = \infty.
\end{align}

While the Wiener criterion completely characterizes boundary continuity, it does not characterize higher-order regularity as in (i). Our first main result identifies the precise geometric condition required for higher regularity, providing a complete answer to (i). We prove that boundary \emph{H\"older continuity} of $u$ at $x_0 \in \partial \Omega$ is equivalent to the capacity density condition at $x_0$.

We say that $\Omega$ satisfies the \emph{capacity density condition} at $x_0 \in \partial \Omega$ with radius $R > 0$, if there is $\theta_0 > 0$ such that
\begin{equation}
\label{eq:CDC}\tag{CDC}
\Theta_{x_0}(r) \ge \theta_0\quad\text{for every }0<r \leq R.
\end{equation}

\begin{theorem}\label{thm-CDC}
Let $x_0 \in \partial\Omega$, $\theta_0>0$, and $R>0$. The following are equivalent:
\begin{enumerate}[\textup(a\textup)]
\item\label{eq-CDC-a}
$\Omega$ satisfies \eqref{eq:CDC} at $x_0$ with constant $\theta_0$ and radius $R$;
\item\label{eq-CDC-b}
there exists $\alpha_0 = \alpha_0(n, s, p, \Lambda, \theta_0) \in (0, 1)$ such that, for every $\alpha \in (0, \alpha_0]$, there exists $C=C(n, s, p, \Lambda, \alpha, \theta_0)>0$ such that the following holds: if $g \in V^{s, p}(\Omega \cap B_R(x_0)) \cap C^\alpha(\overline{B_R(x_0)} \setminus \Omega)$ satisfies for some $L \geq 0$
\begin{equation}\label{eq-g-holder}
\omega_g(r; x_0)  \coloneqq \sup_{\overline{B_r(x_0)} \setminus \Omega}|g-g(x_0)| \leq Lr^\alpha \quad\text{for every }0<r \leq R,
\end{equation}
and if $u$ is a weak solution to \eqref{eq-loc-DP}, then
\begin{equation*}
\esssup_{\Omega\cap B_r(x_0)} |u-g(x_0)| \leq C\left(LR^{\alpha}+\mathcal{A}_{u-g(x_0)}(R; x_0)  \right) \left( \frac{r}{R} \right)^\alpha \quad\text{for every }0<r \leq R,
\end{equation*}
where
\begin{equation}\label{eq-A}
    \mathcal{A}_f(R; x_0) \coloneqq R^{-n/p} \|f\|_{L^p(\Omega \cap B_R(x_0))} + \mathrm{Tail}(f; x_0, R).
\end{equation}
\end{enumerate}
\end{theorem}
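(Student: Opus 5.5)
We prove the two implications separately. The implication (a)$\Rightarrow$(b) is a quantitative iteration of a capacitary oscillation decay. The implication (b)$\Rightarrow$(a) tests (b) on a capacitary potential.

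\textbf{(a)$\Rightarrow$(b).} The key tool is a one-step capacitary decay estimate at the boundary. It is the nonlocal analogue of the Maz'ya-type estimate underlying the Wiener criterion of \cite{KLL23}. Let $u$ solve \eqref{eq-loc-DP} and set $M(\rho)=\esssup_{\Omega\cap B_\rho(x_0)}(u-g(x_0))_+$. We aim to prove
\begin{equation*}
M(\rho/4)\le \bigl(1-c\,\Theta_{x_0}(\rho/4)^{\gamma}\bigr)\,M(\rho) + \omega_g(\rho;x_0) + C\,\mathrm{Tail}\bigl((u-g(x_0))_+;x_0,\rho\bigr),
\end{equation*}
with $c,\gamma>0$ and $C$ depending only on $n,s,p,\Lambda$. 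To obtain it, apply the weak Harnack inequality up to the boundary to the supersolution $M(\rho)-u$, extended by its boundary values on $\overline{B_\rho}\setminus\Omega$. Then compare with the capacitary potential of $\overline{B_{\rho/4}}\setminus\Omega$ in $B_{\rho/2}$; its energy is comparable to $\Theta^{p-1}\rho^{n-sp}$. This forces $M(\rho)-u\gtrsim \Theta^{\gamma} M(\rho)$ on $\Omega\cap B_{\rho/4}$.

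Under \eqref{eq:CDC} the factor $1-c\theta_0^\gamma=:\lambda<1$ is uniform. Apply the estimate with $\rho_j=4^{-j}R$ to both $u$ and $-u$. The initial quantity $M(R)$ is bounded by $C\mathcal{A}_{u-g(x_0)}(R;x_0)$ via the local boundedness estimate of $L^\infty$–$L^p$–tail type. The data term satisfies $\omega_g(\rho_j;x_0)\le L\rho_j^\alpha$.

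The main obstacle is the tail at scale $\rho_j$, because it involves $u$ on all previous annuli. We bound it by
\begin{equation*}
\mathrm{Tail}(u-g(x_0);x_0,\rho_j)^{p-1}\lesssim \sum_{i<j}4^{-(j-i)sp}M(\rho_i)^{p-1} + 4^{-jsp}\,\mathrm{Tail}(u-g(x_0);x_0,R)^{p-1}.
\end{equation*}
We then prove inductively that $M(\rho_j)\le K4^{-j\alpha}$, with $K=C(LR^\alpha+\mathcal{A}_{u-g(x_0)}(R;x_0))$. Here we choose $\alpha_0$ so small that $4^{\alpha_0}\lambda<1$ and $\alpha_0<sp/(p-1)$. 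These conditions make the geometric sums converge and let the tail contribution be absorbed. To make the absorption work, it may be necessary to use $k$ steps per iteration, that is, to replace the ratio $4$ by $4^k$ for a fixed large $k$. Interpolating between the dyadic radii gives the claimed bound for every $0<r\le R$.

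\textbf{(b)$\Rightarrow$(a).} Fix $0<r\le R/2$. Let $u$ be the capacitary potential in $B_{2r}(x_0)$, i.e. the solution to $\mathcal{L}u=0$ in $B_{2r}\setminus K$ with $K=\overline{B_r}\setminus\Omega$, $u=1$ on $K$ and $u=0$ outside $B_{2r}$. Then $v=1-u$ solves \eqref{eq-loc-DP} with radius $r$ and datum $g=0$ near $x_0$, so $L=0$. Its $\mathcal{A}$-term is bounded by a constant, since $0\le v\le1$ and $v=1$ outside $B_{2r}$.

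Applying (b) at radius $r$ gives $v\le C(\rho/r)^\alpha$ on $\Omega\cap B_\rho$, so $u\ge 1/2$ on $B_{\epsilon r}(x_0)$ for a fixed small $\epsilon$. Consequently $\mathrm{cap}(K,B_{2r})\ge c\,\mathrm{cap}(\overline{B_{\epsilon r}},B_{2r})\gtrsim r^{n-sp}$. Indeed, $2u$ is admissible up to truncation, or one can use the energy lower bound directly. This yields \eqref{eq:CDC} with $\theta_0$ depending on the constants in (b). Radii between $R/2$ and $R$ are handled by monotonicity and comparability of capacities.
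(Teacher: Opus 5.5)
Your argument for (b)$\Rightarrow$(a) is sound, and a bit more direct than the paper's. The paper solves an auxiliary problem in $\Omega\cap B_r$ with datum $\psi_r$, and then needs a comparison argument to place that solution below $\mathfrak{R}(K,B_{2r})$. You instead feed $1-\mathfrak{R}(K,B_{2r})$ into (b) directly. This is legitimate because $\Omega\cap B_r\subset B_{2r}\setminus K$ and the datum $1-\psi$ vanishes on $\overline{B_r}\setminus\Omega$. For admissibility you should use $\overline{B_{\epsilon r/2}}$ rather than $\overline{B_{\epsilon r}}$.

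The gap is in (a)$\Rightarrow$(b), exactly at the step you call the main obstacle.

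\begin{itemize}
\item Your one-step estimate carries $C\,\mathrm{Tail}((u-g(x_0))_+;x_0,\rho)$ with $C$ of order one. The adjacent annulus $B_{4\rho}\setminus B_\rho$ alone can contribute an amount of order $M(4\rho)\ge M(\rho)$.
\item Under the inductive hypothesis, your tail bound only yields $C\,\mathrm{Tail}(\cdots;x_0,\rho_j)\le C_1K4^{-j\alpha}$. Here $C_1=C_1(n,s,p,\Lambda)$ is not small; the term $i=j-1$ already produces it.
\item Closing the induction would therefore require $\lambda+C_1\le 4^{-\alpha}<1$. But $C_1$ does not depend on $\theta_0$, and there is no reason for it to be smaller than $1-\lambda=c\theta_0^\gamma$.
\item The conditions $4^{\alpha_0}\lambda<1$ and $\alpha_0<sp/(p-1)$ make the geometric series converge, but they do not make its coefficient small.
\item Iterating the same estimate $k$ times does not help either. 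At every scale the adjacent annulus still enters with weight of order one.
\end{itemize}

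What is missing is a smallness mechanism, and there are two ways to supply one.

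\textbf{The paper's route.} Lemma~\ref{lem-cap-est} keeps only the tail of the excess $(M(\rho)-u)_-$. On each annulus $B_{\rho_i}\setminus B_{\rho_{i+1}}$, this is bounded by the \emph{difference} $M(\rho_i)-M(\rho_j)$ rather than by $M(\rho_i)$. Lemma~\ref{lem-iteration} then absorbs these telescoping differences, with an arbitrarily large constant $a_2$, via the auxiliary quantity $Y_i=X_i+(1-\delta)H_i$. The implication (a)$\Rightarrow$(b) is read off from Theorem~\ref{thm-holder}, with $\alpha_0$ a fixed fraction of $c\theta_0$.

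\textbf{A repair of your DKP-type induction.} You would need a genuinely scale-separated capacitary estimate. Keep the level $M(\rho)$, but run the weak Harnack/capacity argument in $B_{\sigma\rho}$. Since $M(\rho)-u\ge 0$ on all of $B_\rho$, its negative-part tail at scale $\sigma\rho$ equals $\sigma^{sp/(p-1)}$ times the one at scale $\rho$. This would give
\[
M(\sigma\rho/4)\le(1-c\theta_0)M(\rho)+\omega_g(\rho;x_0)+C\sigma^{sp/(p-1)}\,\mathrm{Tail}(\cdots;x_0,\rho).
\]
You would then choose $\sigma$ small depending on $\theta_0$ first, and $\alpha_0$ afterwards.

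As written, neither mechanism is present, so your induction does not close.
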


The space $V^{s,p}(\Omega \cap B_R(x_0))$ and the quantity $\mathrm{Tail}(f; x_0, R)$ are defined in Section~\ref{sec-prelim}. The notion of weak solution is defined in Definition~\ref{def-weak-DP}.

We emphasize that Theorem~\ref{thm-CDC} is a \emph{qualitative} result in the sense that it does not specify the quantitative dependence of the H\"older exponent on $\theta_0$. Moreover, it does not describe how the boundary regularity of $g$ interacts with the geometry of $\Omega$ to determine the precise modulus of continuity of $u$. Our following result establishes these quantitative relations under the assumption that $\Omega$ satisfies \eqref{eq:CDC}.

\begin{theorem}[Boundary H\"older estimate]\label{thm-holder}
Suppose that $\Omega$ satisfies \eqref{eq:CDC} at $x_0 \in \partial \Omega$ with constant $\theta_0>0$ and radius $R>0$. Let $\alpha \in (0,1)$ and let $g \in V^{s, p}(\Omega \cap B_R(x_0)) \cap C^\alpha(\overline{B_R(x_0)} \setminus \Omega)$ be such that \eqref{eq-g-holder} holds for some $L \geq 0$.
Let $u$ be a weak solution to \eqref{eq-loc-DP}. Then there exist constants $c, C>0$, depending only on $n$, $s$, $p$, and $\Lambda$, such that for every $0<r<R/4$,
    \begin{equation*}
        \esssup_{\Omega\cap B_r(x_0)}|u-g(x_0)| \leq C
        \begin{cases}
            r^{\alpha} (K_RR^{-\alpha}+L) & \text{if $\alpha<c \theta_0$},\\
            r^{\alpha} (K_RR^{-\alpha}+L \log(R/r))  & \text{if $\alpha=c\theta_0$},\\
            r^{c\theta_0}  (K_R R^{-c\theta_0}+LR^{\alpha-c\theta_0})&\text{if $\alpha>c\theta_0$},
        \end{cases}
    \end{equation*}
    where
    \begin{equation*}
       K_R\coloneqq \omega_g(R;x_0)+\mathcal A_{u-g(x_0)}(R;x_0).
    \end{equation*}
\end{theorem}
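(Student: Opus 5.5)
The plan is to derive Theorem~\ref{thm-holder} from a one-step oscillation (decay) estimate at the boundary, quantified in terms of $\Theta_{x_0}$, and then to iterate it over dyadic scales. Normalize $g(x_0)=0$. Set $r_j = 4^{-j}R$ and
\[
M_j \coloneqq \esssup_{\Omega\cap B_{r_j}(x_0)} |u| .
\]
The key step is a capacitary decay lemma, in the spirit of the Wiener-type estimates of \cite{KLL23}: for $\rho\le R/4$,
\[
\esssup_{\Omega\cap B_{\rho/4}} (u-\omega_g(\rho))_+ \le \bigl(1-c\,\Theta_{x_0}(\rho/2)\bigr)\sup_{B_\rho}(u-\omega_g(\rho))_+ + C\,\mathrm{Tail}\bigl((u-\omega_g(\rho))_+;x_0,\rho\bigr),
\]
and the symmetric estimate for $-u$. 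I would prove it as follows. The function $(u-\omega_g(\rho))_+$ is a subsolution in $B_\rho$ that vanishes on $\overline{B_\rho}\setminus\Omega$. Compare $M-(u-\omega_g)_+$, with $M$ its sup, against the $(s,p)$-capacitary potential of $\overline{B_{\rho/2}}\setminus\Omega$ in $B_\rho$. A weak Harnack inequality for the supersolution $M-(u-\omega_g)_+$, combined with a lower bound for the capacitary potential of the form $\gtrsim \Theta^{}$ (after the $(p-1)$-th root), gives a reduction of the supremum by the factor $1-c\Theta$. Since \eqref{eq:CDC} gives $\Theta\ge\theta_0$, the reduction factor is $1-c\theta_0 \le 4^{-c'\theta_0}$ (with $\Theta$ capped by a dimensional constant).

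Next I would iterate. Let $\lambda = 4^{-c\theta_0}$. The tail term at scale $r_j$ involves all earlier $M_i$ weighted by $4^{-(j-i)sp/(p-1)}$, together with $\mathcal A(R)$ at the top scale; $\omega_g(r_j)\le L r_j^\alpha$ enters additively. One obtains a recursion of the form
\[
M_{j+1}\le \lambda M_j + C L r_j^{\alpha} + \varepsilon\sum_{i\le j}4^{-(j-i)sp/(p-1)}M_i + C4^{-jsp/(p-1)}K_R .
\]
To absorb the tail sum, I would use the standard trick of passing to a coarser subsequence $r_{jm}$ with $m$ large, chosen depending only on $n,s,p,\Lambda$. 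Iterating $m$ steps at once makes the tail contribution small relative to the decay. Shrinking $c$ if necessary (in particular so that $c\theta_0<sp/(p-1)$, which handles the $K_R$ term), one then gets
\[
M_j\le C\lambda^j K_R + C L\sum_{i<j}\lambda^{j-1-i} r_i^{\alpha}.
\]

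Finally, I would evaluate the geometric sum $r_j^\alpha\sum_{i<j}(\lambda 4^{\alpha})^{j-i}$, which splits into three regimes according to the sign of $\alpha-c\theta_0$:
\begin{itemize}
\item if $\alpha<c\theta_0$, the sum is bounded and gives $Lr^\alpha$;
\item if $\alpha=c\theta_0$, it contains $j\sim\log(R/r)$ equal terms, giving the logarithm;
\item if $\alpha>c\theta_0$, it is dominated by the $i=0$ term, giving $\lambda^j R^\alpha = (r/R)^{c\theta_0}R^\alpha$.
\end{itemize}
Combined with $\lambda^jK_R = (r/R)^{c\theta_0}K_R \le (r/R)^{\alpha}K_R$ in the first two regimes (since $\alpha\le c\theta_0$), this yields exactly the three cases of the theorem. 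Interpolating between the dyadic radii handles general $r<R/4$.

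The main obstacle is the decay lemma with an explicitly \emph{linear} dependence $1-c\Theta$. This requires a quantitative lower bound for the capacitary potential in terms of $\Theta$ that is valid for nonlocal $p$-Laplace-type operators. The tail must be handled with care so that $c$ depends only on $n,s,p,\Lambda$, and I would adapt the corresponding estimate from \cite{KLL23}. A secondary difficulty is the tail absorption in the iteration, which must be done without destroying the dependence of the rate on $\theta_0$.
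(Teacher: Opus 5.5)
Your overall architecture (capacitary one-step decay, dyadic iteration, three geometric-sum regimes) matches the paper's. The paper proves the general Wiener modulus (Theorem~\ref{thm-main}) by such an iteration and then inserts $\Theta_{x_0}\ge\theta_0$ and $\Theta_{x_0}\le C$. However, the tail absorption, which you call secondary, fails as you set it up, and it is exactly where the linear dependence on $\theta_0$ is decided.

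\textbf{The gap.} First, your error term $\mathrm{Tail}((u-\omega_g(\rho))_+;x_0,\rho)$ is the tail of the whole function $v=(u-\omega_g(\rho))_+$. Its contribution from the annulus just outside $B_\rho$ can be as large as $c(n,s,p)\,\esssup_{B_{4\rho}}v$, which is of the same order as the quantity you are decaying, so it swamps the gain $c\Theta M$. Second, your recursion $M_{j+1}\le\lambda M_j+\varepsilon\sum_{i\le j}4^{-(j-i)sp/(p-1)}M_i+\dots$ with $\lambda=1-c\theta_0$ cannot close unless $\varepsilon<c\theta_0$: already the term $i=j$ gives $M_{j+1}\le(\lambda+\varepsilon)M_j+\dots$. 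If $\varepsilon$ comes from coarsening to radius ratio $4^m$, then $\varepsilon\approx 4^{-msp/(p-1)}$, which forces $m\gtrsim\log(1/\theta_0)$. Over one coarse step you gain only a single factor $1-c\theta_0$, because applying the lemma at each intermediate fine scale reintroduces non-small nearest-annulus tails. The exponent you obtain is therefore of order $\theta_0/\log(1/\theta_0)$, not $c\theta_0$ with $c=c(n,s,p,\Lambda)$. So your claim that $m$ can be chosen depending only on $n,s,p,\Lambda$ is the step that breaks.

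\textbf{The missing idea.} You need to keep the tail in \emph{increment} form and exploit telescoping.
\begin{itemize}
\item Lemma~\ref{lem-cap-est} has the error term $\mathrm{Tail}((M(\rho)-v)_-;x_0,\rho)$. This sees only the excess of $v$ over the current supremum. It is therefore bounded by $\sum_{j\ge1}q^{j}\,(M(4^{j}\rho)-M(\rho))$ plus a far-tail term, with $q=2^{-sp/(p-1)}$.
\item With $X_i=\esssup_{B_{r_i}}u_+$, this gives the paper's recursion $X_i\le(1-c_0\theta_i)X_{i-1}+C\sum_{m=1}^{i-1}q^{i-m}(X_{m-1}-X_m)+c_0\theta_i l_i+Cq^{i}(A+l_0)$. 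Here the constant $C$ in front of the increments is \emph{not} small.
\item Lemma~\ref{lem-iteration} absorbs this term via $Y_i=X_i+(1-\delta)H_i$, where $H_i=\sum_{m\le i}q^{i-m}(X_{m-1}-X_m)$.
\item For some $\delta=\delta(a_1,a_2,q,\Theta_\ast)$, independent of any lower bound on $\theta_i$, one gets $Y_i\le(1-\delta a_1\theta_i)Y_{i-1}+\delta h_i+\delta Bq^i$. This yields decay $\exp(-c\sum_m\theta_m)$ with $c$ depending only on the data.
\end{itemize}
Heuristically, along a decay of rate $1-c\theta_0$ the increments are themselves $O(\theta_0)$ times the supremum. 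That is why the increment form is compatible with a linear-in-$\theta_0$ rate while your $M_i$-form is not. With this replacement, the rest of your plan goes through, including the three-case evaluation of $\sum_{i<j}\lambda^{j-1-i}r_i^\alpha$.
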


\begin{remark}
    If \eqref{eq:CDC} and \eqref{eq-g-holder} hold in a neighborhood of $x_0$ with uniform constants $\theta_0$, $\alpha$, and $L$, then by combining Theorem~\ref{thm-holder} with interior $C^{\beta}$ estimates from \cite{DCKP14}, we can obtain boundary H\"older regularity of $u$ of order $\min\{c \theta_0,\alpha,\beta\}$ in a neighborhood of $x_0$ (up to a logarithmic correction in case $\alpha = c \theta_0$). The proof is standard and goes exactly as in Step 2 of the proof of \cite[Proposition 2.6.4]{FeRo24}.
\end{remark}

H\"older regularity results for nonlocal equations in domains satisfying \eqref{eq:CDC} were previously only known for the fractional Laplacian, using an approach relying on the Caffarelli--Silvestre extension in \cite{Li25}. Hence, Theorem~\ref{thm-holder} is new both in the linear case with measurable coefficients and in the nonlinear case $p \neq 2$. Previous research has mostly focused on the optimal $C^s$ regularity of solutions in the case $p = 2$, typically in smooth domains such as $C^{1,\text{Dini}}$, $C^{1,\alpha}$, or $C^{1,1}$ domains \cite{AFLY25,AbGr23,AbRo20,BGR10,BGR14,BKK25,CKS10,CKW22,ChSo98,ChSo25b,ChSo26,GKK20,Gru15,Gru24,Gru25,KiWe24,RoSe14,RoSe16,RoSe16b,RoSe17,RoWe24}, which requires additional regularity assumptions on the kernel $k$. Moreover, only a few results seem to be available on the solution behavior in Lipschitz domains \cite{BLN24,BoNo23,Bog00,Jak02,TTV18} or for general $p \in (1,\infty)$ \cite{Bjo24,BKK25b,IaMo24,IM26,IMS16,IMS20}. We refer to \cite{BBK24,BBK25,KiLe24,LiLi17,Pal26} for additional fine qualitative results on the boundary behavior of solutions to \eqref{eq-loc-DP}, including the study of irregular boundary points, removability criteria, and equivalence of Perron and Sobolev regularity.

For local quasilinear equations, \cite{GaZi77} derived boundary H\"older estimates for weak solutions under the capacity density condition. For the Laplacian, \cite{Anc86} characterized the capacity density condition in terms of uniform decay estimates for harmonic measure. For analogous results concerning the global Dirichlet problem, see \cite{Aik02} for harmonic extensions, \cite{AS06} for $p$-harmonic extensions, and \cite{CHMPZ25} for a more general setting. More recently, for the fractional Laplacian, \cite{Li25} established a nonlocal counterpart of \cite{Aik02}.\\

We emphasize that our results also capture the boundary behavior of solutions in highly irregular regimes where \eqref{eq:CDC} fails or $g$ is not H\"older continuous. 
Specifically, the following result yields explicit quantitative estimates for solutions in terms of $\omega_g(r;x_0)$ and $\Theta_{x_0}(r)$, thereby providing a complete answer to question (ii). Moreover, it can be seen as a quantitative refinement of the Wiener criterion \eqref{eq-Wiener}.

\begin{theorem}[Wiener modulus of continuity I]\label{thm-main}
Let $x_0 \in \partial \Omega$ and $R>0$. Let $g \in V^{s, p}(\Omega \cap B_R(x_0)) \cap C(\overline{B_R(x_0)} \setminus \Omega)$ and let $u$ be a weak solution to \eqref{eq-loc-DP}. Then there exist constants $c, C>0$, depending only on $n$, $s$, $p$, and $\Lambda$, such that for every $0<r<R/4$,
\begin{align}\label{eq-main-modulus}
\begin{split}
\esssup_{\Omega\cap B_r(x_0)}|u-g(x_0)|
&\leq C\int_{4r}^R\exp\left(-c \int_{4r}^{\rho} \Theta_{x_0}(\tau)\frac{\mathrm{d} \tau}{\tau}\right) \omega_g(\rho; x_0)\Theta_{x_0}(\rho)\frac{\mathrm{d}\rho}{\rho}\\
&\quad + C\left(\omega_g(R;x_0)+\mathcal A_{u-g(x_0)}(R;x_0) \right)\exp\left(-c \int_{4r}^R \Theta_{x_0}(\rho)\frac{\mathrm{d} \rho}{\rho}\right),
\end{split}
\end{align}
where $\mathcal{A}$ is given as in \eqref{eq-A}.
\end{theorem}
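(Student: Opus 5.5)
The plan is to prove a one-step capacitary oscillation decay lemma at each scale and then iterate it dyadically, summing the contributions of the boundary datum. We normalize $g(x_0)=0$ and write $M(\rho)\coloneqq \esssup_{\Omega\cap B_\rho(x_0)} u_+$; the function $u_-$ is treated symmetrically. We also set $\mathrm{Tail}$ terms aside for the moment.

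\textbf{One-step estimate.} The core is the following claim: for $\rho\le R/4$,
\begin{equation*}
M(\rho)\le \bigl(1-c\,\Theta_{x_0}(\rho)\bigr)\Bigl(M(4\rho)+\mathrm{Tail}(u_+;x_0,4\rho)\Bigr)+C\,\omega_g(4\rho;x_0).
\end{equation*}
To obtain it, consider $v\coloneqq M(4\rho)+T-u$, where $T$ is the tail contribution and $M(4\rho)$ is understood as $\sup(u,\omega_g)$ on $B_{4\rho}$. Then $v$ is a nonnegative supersolution in $B_{4\rho}$ up to a tail error. Moreover, on $\overline{B_\rho}\setminus\Omega$ we have $v\ge M(4\rho)+T-\omega_g(4\rho)$, so $v$ is large there.

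The key input is a capacitary lower bound: a weak Harnack inequality combined with the variational characterization of capacity. A nonnegative supersolution that is $\ge m$ on $K=\overline{B_\rho}\setminus\Omega$ satisfies $\inf_{B_{\rho}} v\gtrsim m\,\Theta_{x_0}(\rho)$. This is proved by testing the equation with $(m-v)_+$-type cutoffs to bound the energy of $\min(v,m)/m$ from above, and comparing with $\mathrm{cap}_{s,p}(K,B_{2\rho})$ from below. This is exactly the estimate used in the Wiener criterion of \cite{KLL23}, so we would import it (or reprove it with the $p-1$ power producing $\Theta$). Converting the bound on $\inf v$ into the bound on $\sup u$ gives the claim.

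\textbf{Iteration.} Set $\rho_j=4^{-j}R$ and $\Theta_j=\Theta_{x_0}(\rho_j)$. Iterating the one-step estimate gives
\begin{equation*}
M(\rho_k)\le \prod_{j<k}(1-c\Theta_j)\,K_R+C\sum_{j<k}\prod_{j<i<k}(1-c\Theta_i)\,\omega_g(\rho_j)\,(\ldots).
\end{equation*}
Using $1-x\le e^{-x}$ and the comparability $\Theta(\tau)\approx\Theta(\rho_j)$ for $\tau\in[\rho_{j+1},\rho_j]$ (monotonicity of capacity up to constants), the products become $\exp(-c\int\Theta\,\mathrm{d}\tau/\tau)$ and the sums become the integral in \eqref{eq-main-modulus}.

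\textbf{Extracting the factor $\Theta$ in the integrand.} The integrand in \eqref{eq-main-modulus} carries a factor $\Theta_{x_0}(\rho)$, which requires a sharper one-step estimate. The error from $g$ must enter as $\omega_g\cdot\Theta$, not as $\omega_g$. This is natural: write $M(4\rho)-\omega_g$ and observe that the decrement is $c\Theta\,(M-\omega_g)$. This gives
\begin{equation*}
M(\rho)\le (1-c\Theta)M(4\rho)+c\Theta\,\omega_g(4\rho)+\text{tail},
\end{equation*}
i.e.\ a convex combination. So in the one-step lemma we should apply the capacitary bound to $v=M(4\rho)-u$ with threshold $m=M(4\rho)-\omega_g(4\rho)$.

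\textbf{Main obstacle.} The tail terms are the hard part. The nonlocal tail at scale $\rho_j$ involves $M(\rho_i)$ for all $i<j$, weighted by $4^{-(j-i)sp/(p-1)}$. I would handle this as in \cite{KLL23} by including a geometrically weighted tail in the iterated quantity. This perturbation can be absorbed because $\Theta\le C$ is bounded and the weights decay geometrically, at the cost of adjusting $c$ and $C$. Ensuring that the absorption is compatible with the exponential factor, which may decay only slowly when $\Theta$ is small, needs care. I would choose the iterated quantity as $M(\rho_j)+\varepsilon\,\mathrm{Tail}(u_+;x_0,\rho_j)$ with $\varepsilon$ small and verify the recursion directly.
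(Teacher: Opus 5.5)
Your one-step estimate in convex-combination form, $M(\rho)\le(1-c\Theta)M(4\rho)+c\Theta\,\omega_g(4\rho)+\text{tail}$, obtained by applying the capacitary lemma to $M(4\rho)-u$ (equivalently to $(u-\omega_g(4\rho))_+$), is exactly the paper's Step~2. The passage from dyadic sums to integrals is also routine. Note, however, that only one-sided comparisons such as $\Theta_{x_0}(\tau)\lesssim\Theta_{x_0}(\rho)$ for $\rho/4\le\tau\le\rho$ hold, not the two-sided comparability you state; these suffice.

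The genuine gap is the tail, which is where the new work of the proof lies, and the fix you propose would not close. The error produced by the capacitary estimate is $\mathrm{Tail}((u-M(4\rho))_+;x_0,4\rho)$. The iteration can only work if this is bounded by geometrically weighted \emph{decrements} of the supremum,
\[
\mathrm{Tail}\bigl((u-M(4\rho))_+;x_0,4\rho\bigr)\lesssim\Bigl(\frac{\rho}{R}\Bigr)^{\frac{sp}{p-1}}\bigl(\mathcal A_u(R;x_0)+\omega_g(R;x_0)\bigr)+\sum_{j\ge1,\ 4^{j+1}\rho\le R/4}q^{j}\bigl(M(4^{j+1}\rho)-M(4\rho)\bigr),
\]
with $q=2^{-sp/(p-1)}$. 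This bound comes from $(u-M(4\rho))_+\le M(4^{j+1}\rho)-M(4\rho)$ on $B_{4^{j+1}\rho}(x_0)$. You work instead with $\mathrm{Tail}(u_+;x_0,\cdot)$, i.e.\ with the absolute size of $u_+$ at larger scales. Such an error is of the same order as $M$ and carries no factor $\Theta$, whereas the gain per step is only $c\,\Theta_jM$, and $\Theta_j$ may be arbitrarily small (the theorem must hold without (CDC)). A recursion like $M_j\le(1-c\Theta_j)M_{j-1}+C\sum_{k\ge1}q^kM_{j-1-k}$ is satisfied by every constant sequence and therefore contains no decay information. Adding $\varepsilon\,\mathrm{Tail}(u_+;x_0,\rho_j)$ to the iterated quantity does not repair this. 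That tail changes from step to step by amounts of order $M_{j-1}$ (the annulus $B_{\rho_{j-1}}\setminus B_{\rho_j}$ contributes), again not proportional to $\Theta_j$. So no choice of $\varepsilon$ gives a recursion that is uniform as $\Theta_j\to0$.

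Even with the decrement bound in hand, one gets
\[
X_i\le(1-a_1\theta_i)X_{i-1}+a_2\sum_{m=1}^{i-1}q^{i-m}(X_{m-1}-X_m)+h_i+Bq^i,
\]
with $a_2$ a fixed and possibly large constant, and a \emph{small} weight on the auxiliary quantity still fails. Set $H_i\coloneqq\sum_{m\le i}q^{i-m}(X_{m-1}-X_m)=X_{i-1}-X_i+qH_{i-1}$ and $Z_i=X_i+\varepsilon H_i$. The natural recursion for $Z_i$ closes only if $q\bigl((1-\varepsilon)a_2+\varepsilon\bigr)\le\varepsilon$, which is false for small $\varepsilon$ unless $qa_2$ is small.

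The paper's iteration lemma (Lemma~\ref{lem-iteration}) uses the opposite weighting, $Y_i=X_i+(1-\delta)H_i$ with $\delta$ small. Then $Y_i=\delta X_i+(1-\delta)X_{i-1}+(1-\delta)qH_{i-1}$. The new step, together with its large tail error $a_2qH_{i-1}$, therefore enters only with weight $\delta$ and is absorbed by the slack from $q<1$. This gives $Y_i\le(1-\delta a_1\theta_i)Y_{i-1}+\delta h_i+\delta Bq^i$, uniformly in how small $\theta_i$ is. The far-field terms $Bq^j$ are then turned into $C\exp(-c\sum_{m\le i}\theta_m)$ by splitting the sum at $j_0\approx\frac{1}{2\Theta_\ast}\sum_{m\le i}\theta_m$ and using $\theta_m\le\Theta_\ast$.

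Your remark that $\Theta$ is bounded points toward this last step. The missing ideas are the decrement structure of the tail and the near-unit weight on the auxiliary quantity.
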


Note that Theorem~\ref{thm-holder} is a consequence of Theorem~\ref{thm-main}. Furthermore, as a corollary of Theorem~\ref{thm-main}, we obtain a simpler form of the Wiener modulus of continuity.

\begin{corollary}[Wiener modulus of continuity II]\label{cor-main-2}
Under the same setting as in Theorem~\ref{thm-main}, there exist constants $c, C>0$, depending only on $n$, $s$, $p$, and $\Lambda$, such that for every $0<r<R/4$,
\begin{equation*}
        \esssup_{\Omega\cap B_r(x_0)}|u-g(x_0)|
    \leq C \omega_g(R;x_0)+ C\mathcal A_{u-g(x_0)}(R;x_0)
    \exp\left(-c \int_{4r}^R \Theta_{x_0}(\rho)\frac{\mathrm{d} \rho}{\rho}\right),
\end{equation*}
where $\mathcal{A}$ is given as in \eqref{eq-A}.
\end{corollary}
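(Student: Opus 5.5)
We deduce Corollary~\ref{cor-main-2} directly from Theorem~\ref{thm-main} by bounding the first integral term in \eqref{eq-main-modulus}. The second term there already has the desired form, except that it also contains $C\omega_g(R;x_0)\exp(\cdots)\le C\omega_g(R;x_0)$, because the exponential is at most $1$. It therefore suffices to show that
\begin{equation*}
I \coloneqq \int_{4r}^R\exp\left(-c \int_{4r}^{\rho} \Theta_{x_0}(\tau)\frac{\mathrm{d} \tau}{\tau}\right) \omega_g(\rho; x_0)\Theta_{x_0}(\rho)\frac{\mathrm{d}\rho}{\rho} \le C\,\omega_g(R;x_0).
\end{equation*}

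First, $\omega_g(\rho;x_0)$ is nondecreasing in $\rho$, since it is a supremum over the increasing sets $\overline{B_\rho(x_0)}\setminus\Omega$. Hence $\omega_g(\rho;x_0)\le\omega_g(R;x_0)$ for $\rho\le R$, and we may pull this factor out of the integral. Next, set
\begin{equation*}
F(\rho)\coloneqq \int_{4r}^{\rho}\Theta_{x_0}(\tau)\frac{\mathrm{d}\tau}{\tau}.
\end{equation*}
This function is absolutely continuous and nondecreasing, with $F'(\rho)=\Theta_{x_0}(\rho)/\rho$ for a.e.\ $\rho$. Local integrability of $\Theta_{x_0}(\tau)/\tau$ on $[4r,R]$ follows from the bound $\mathrm{cap}_{s,p}(\overline{B_\tau}\setminus\Omega,B_{2\tau})\le \mathrm{cap}_{s,p}(\overline{B_\tau},B_{2\tau})\approx \tau^{n-sp}$, which gives $\Theta_{x_0}\le C$. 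Measurability of $\Theta_{x_0}$ also needs a brief remark; it follows from monotonicity-type properties of capacity, and in any case the integral is already assumed to make sense in Theorem~\ref{thm-main}. By the chain rule for absolutely continuous functions,
\begin{equation*}
I\le \omega_g(R;x_0)\int_{4r}^R e^{-cF(\rho)}F'(\rho)\,\mathrm{d}\rho=\omega_g(R;x_0)\,\frac{1-e^{-cF(R)}}{c}\le \frac{\omega_g(R;x_0)}{c}.
\end{equation*}

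Combining these bounds gives the corollary, with the constant $C$ replaced by $C(1+1/c)$ and the same $c$. No step is a real obstacle. The only points needing care are the monotonicity of $\omega_g$ and the legitimacy of the change of variables, which amounts to the boundedness of $\Theta_{x_0}$ noted above.
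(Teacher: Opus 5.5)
Your proposal is correct and follows essentially the same route as the paper: bound $\omega_g(\rho;x_0)\le\omega_g(R;x_0)$ by monotonicity, recognize the integrand as $-\frac{1}{c}\frac{\mathrm{d}}{\mathrm{d}\rho}\exp\bigl(-c\int_{4r}^{\rho}\Theta_{x_0}(\tau)\frac{\mathrm{d}\tau}{\tau}\bigr)$ to get the bound $\omega_g(R;x_0)/c$, and absorb the $\omega_g(R;x_0)\exp(\cdots)$ part of the second term using that the exponential is at most $1$. Your extra remarks on absolute continuity and the boundedness of $\Theta_{x_0}$ only make explicit what the paper leaves implicit.
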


An analogous boundary Dini continuity result can be readily deduced, provided that $\omega_g$ is a Dini modulus and 
\begin{equation*}
    \Theta_{x_0}(r) \geq \kappa [\log(R/r)]^{-1} \quad \text{for all sufficiently small $r>0$},
\end{equation*}
where $\kappa$ is sufficiently large. Moreover, Corollary~\ref{cor-main-2} provides a direct quantitative proof of the sufficient part of the Wiener criterion \cite[Theorem~1.1]{KLL23}. Indeed, if $g$ is continuous at $x_0$ and the Wiener integral diverges at $x_0$, then letting $r\to 0$ and subsequently $R\to 0$ in the estimate above shows that $u$ is also continuous at $x_0$. We note that the proof of the sufficient part in \cite{KLL23} proceeds by contradiction and does not yield an explicit capacitary modulus of continuity.

Corollary~\ref{cor-main-2} establishes a nonlocal counterpart to the celebrated boundary potential theory for second-order quasi-linear elliptic problems (see for instance \cite{GaZi77} and \cite[Theorem~6.18]{HKM06}). In the nonlocal setting, such capacitary potential estimates were previously available only for the special case of the fractional Laplacian, $\mathcal{L} = (-\Delta)^s$ (see \cite{Bjo24}). The approach in \cite{Bjo24} heavily relies on the Caffarelli--Silvestre extension, a tool that is not available in our setting \eqref{eq-L}--\eqref{eq-k}. The main strategy of our proof is instead to iterate a suitable boundary capacity estimate (see Lemma~\ref{lem-cap-est}). Although this procedure is quite standard for local equations (see \cite{GaZi77,HKM06,MaZi97}), the study of the nonlocal case is more involved due to the appearance of tail terms. Managing these nonlocal contributions requires a novel iteration lemma (see Lemma~\ref{lem-iteration}).\\

Finally, we turn to a different aspect of the capacity density condition, namely its dependence on the parameters $(s, p)$. Such a dependence raises a natural question: how does the capacity density condition change when the parameters of the operators are varied? In particular, it is of interest to compare the geometric assumptions corresponding to different operators, including the local case $s=1$.

For $0<s\leq 1<p<\infty$, let $\Gamma_{s, p}$ denote the class of triples $(\Omega, x_0, R)$ such that $\Omega$ satisfies the capacity density condition \eqref{eq:CDC} at $x_0 \in \partial \Omega$ with radius $R>0$. When $s=1$, the classical $p$-condenser capacity is used in the definition of \eqref{eq:CDC}; see \cite[Chapter~2]{HKM06} for instance.

The following theorem gives a complete answer to this question by characterizing all possible inclusions between these classes.

\begin{theorem}\label{thm-CDC-comparison}
Let $0<s_i\leq 1<p_i$ be such that $(s_1, p_1) \neq (s_2, p_2)$ and $s_ip_i \leq n$ for $i=1, 2$. Then
\begin{equation}\label{eq-inclusion}
\Gamma_{s_2, p_2} \subsetneq \Gamma_{s_1, p_1}
\end{equation}
holds if and only if either
\begin{equation}\label{eq-sp-comp}
s_2p_2=s_1p_1\text{ and }p_1<p_2, \quad\text{or} \quad s_2p_2 < s_1p_1.
\end{equation}
\end{theorem}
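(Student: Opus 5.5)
The plan is to prove the two implications separately, using nonlinear potential theory for the fractional $(s,p)$-capacity together with known results for uniformly fat sets.

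\textbf{Sufficiency.} Assume \eqref{eq-sp-comp}; we show $\Gamma_{s_2,p_2}\subset\Gamma_{s_1,p_1}$. The natural route is a Wolff-potential / Hausdorff-content characterization of the capacity density condition. For $s=1$, Lewis' self-improvement theorem states that uniform $p$-fatness implies uniform $q$-fatness for some $q<p$. Its analogues for Riesz/Bessel capacities with index $sp$ (Mikkonen, Lewis, Adams--Hedberg) say that, when $sp\le n$, the condition
\[
\Theta_{x_0}(r)\ge\theta_0 \quad \text{for all } 0<r\le R
\]
is equivalent to a uniform lower bound on a Hausdorff content $\mathcal H^{d}_\infty(\overline{B_r(x_0)}\setminus\Omega)\gtrsim r^{d}$ for some $d>n-sp$. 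We need this equivalence at all small scales, obtained from a self-improvement argument. The first step is to establish the comparability $\mathrm{cap}_{s,p}(K,B_{2r})\approx r^{n-sp}$ at scale $r$ with the Bessel/Riesz capacity $C_{s,p}$ of $K\subset \overline{B_r}$, using trace/extension results between $W^{s,p}$ and Besov spaces. With the Hausdorff content characterization in hand, the case $s_2p_2<s_1p_1$ is immediate: fatness for index $s_2p_2$ gives content of dimension $d>n-s_2p_2$, and self-improvement lets us take $d$ just above $n-s_2p_2$. Since $n-s_1p_1<n-s_2p_2<d$, Frostman's lemma then gives a measure $\mu$ with $\mu(B_\rho)\lesssim\rho^{d}$, and a Wolff-potential estimate yields $C_{s_1,p_1}\gtrsim r^{n-s_1p_1}$. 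The case $s_1p_1=s_2p_2$ with $p_1<p_2$ is the delicate one, since the dimension is fixed. Here we intend to use the Adams--Hedberg embedding between capacities of equal index $sp$, namely that $C_{s_2,p_2}$-fatness implies $C_{s_1,p_1}$-fatness when $p_1<p_2$. The relevant comparison of Wolff potentials is
\[
W^{\mu}_{s_1,p_1}\lesssim W^{\mu}_{s_2,p_2}
\]
in the appropriate normalized form, since $1/(p-1)$ decreases in $p$. Alternatively, we can reduce to the strict case by self-improvement: $\Gamma_{s_2,p_2}\subset\Gamma_{s_2,p_2-\varepsilon}$ for index-preserving perturbations.

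\textbf{Strictness and necessity.} These require counterexamples built from self-similar Cantor-type sets $E$ with $\Omega=B_1\setminus E$ and $x_0\in E$. If $s_2p_2<s_1p_1$, take $E$ Ahlfors regular of dimension $d$ with
\[
n-s_1p_1<d\le n-s_2p_2 .
\]
Then $E$ is uniformly $(s_1,p_1)$-fat, but has zero $(s_2,p_2)$-capacity when $d<n-s_2p_2$ (or $d=n-s_2p_2$ via the borderline). This shows both that the inclusion is strict and that the reverse inclusion fails. For equal index with $p_1<p_2$, we use sets of dimension exactly $n-sp$ with logarithmic gauge corrections: capacities of index $sp$ with different $p$ detect different gauge functions $h(t)=t^{n-sp}\log(1/t)^{-\gamma}$, with the threshold at $\gamma=p-1$. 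Choosing $\gamma$ between $p_1-1$ and $p_2-1$ should give a set in $\Gamma_{s_1,p_1}\setminus\Gamma_{s_2,p_2}$, hence failure of the inclusion when $p_1>p_2$ and strictness when $p_1<p_2$. Together, these show that \eqref{eq-inclusion} forces \eqref{eq-sp-comp}.

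\textbf{Main obstacle.} The hardest step is the equal-index case. It needs sharp two-sided capacity estimates for generalized Cantor sets in terms of Wolff potentials, uniformly at all scales (not just positivity of capacity), and a justification that the scale-localized condenser capacity $\mathrm{cap}_{s,p}(\cdot,B_{2r})$ with $s<1$ is comparable to the Bessel capacity $C_{s,p}$. I would first establish this comparability lemma, via extension/restriction and Hardy-type inequalities, and then import the Adams--Hedberg/Wolff machinery for both directions.
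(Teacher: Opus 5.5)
\textbf{Gap: the equal-index counterexample does not lie in $\Gamma_{s_1,p_1}$.} Your sufficiency argument is essentially fine. The gap is in the counterexamples for the equal-index case. These are needed both for strictness when $s_1p_1=s_2p_2$, $p_1<p_2$, and for necessity when $p_1>p_2$. A single Cantor set $E$ with gauge $h(t)=t^{n-sp}(\log(1/t))^{-\gamma}$, $p_1-1<\gamma<p_2-1$, does have $C_{s_2,p_2}(E)=0<C_{s_1,p_1}(E)$. However, $B_1\setminus E$ fails \eqref{eq:CDC} for $(s_1,p_1)$ at every $x_0\in E$.

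To see this, rescale $E\cap\overline{B_r(x_0)}$ to unit size and write $L=\log(1/r)$. The normalized natural measure satisfies $\tilde\mu(B_\rho(x))\eqsim\rho^{n-sp}\bigl(L/(L+\log(1/\rho))\bigr)^{\gamma}$, so $\mathbf W^{\tilde\mu}_{s_1,p_1}\eqsim L$. For such homogeneous Cantor sets this measure is extremal up to constants. Hence the rescaled capacity is $\eqsim L^{1-p_1}$, i.e.\ $\Theta_{x_0}(r)\eqsim(\log(1/r))^{-1}\to0$, exactly as in Example~\ref{ex-Wiener-without-CDC}. The logarithmic smallness that separates $p_1$ from $p_2$ is incompatible with scale-invariant thickness, and a genuinely self-similar set cannot carry a logarithmic gauge.

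\textbf{The missing idea.} Use such a set only as a unit-scale seed. Take a compact $K\subset B_1\setminus\overline{B_{1/2}}$ with $C_{s_2,p_2}(K)=0<C_{s_1,p_1}(K)$; the paper quotes \cite[Theorem~8.3]{BBK25}, and your log-gauge set would also do. Then set $\Omega=B_1\setminus(\{0\}\cup\bigcup_{i\ge0}2^{-i}K)$. Since \eqref{eq:CDC} is required only at the single point $0$, one copy per dyadic scale suffices:
\begin{itemize}
\item scaling gives $\mathrm{cap}_{s_1,p_1}(\overline{B_r}\setminus\Omega,B_{2r})\gtrsim r^{n-s_1p_1}C_{s_1,p_1}(K)$;
\item countable subadditivity and $C_{s_2,p_2}(\{0\})=0$ (as $s_2p_2\le n$) make the $(s_2,p_2)$-capacity vanish at every scale.
\end{itemize}
This one construction also covers the strict-index case, and necessity follows by swapping the two pairs. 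Your Ahlfors-regular example for $s_2p_2<s_1p_1$ is correct but not needed.

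\textbf{Self-improvement does not hold for the pointwise condition.} A related error runs through your sufficiency paragraph. Lewis-type self-improvement holds for uniformly fat sets, not for the pointwise condition at $x_0$ that defines $\Gamma_{s,p}$. In fact, the theorem itself, applied to the pairs $(s_2,p_2)$ and $(s_2,p_2-\varepsilon)$, gives $\Gamma_{s_2,p_2-\varepsilon}\subsetneq\Gamma_{s_2,p_2}$. So your ``alternative'' inclusion $\Gamma_{s_2,p_2}\subset\Gamma_{s_2,p_2-\varepsilon}$ is false. For the same reason, the claimed equivalence of \eqref{eq:CDC} with a Hausdorff content bound of some dimension $d>n-sp$ is false: the dyadic construction with a seed of dimension exactly $n-sp$ is a counterexample.

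Neither claim is needed. For $s_2p_2<s_1p_1$, the bound $C_{s,p}\lesssim\mathcal H^{n-sp}_\infty$ already gives content of dimension $n-s_2p_2>n-s_1p_1$. Your primary equal-index route, comparing Wolff potentials, is exactly the paper's Lemma~\ref{lem-cap-sp-comp}. The ``normalization'' you allude to is this: the extremal measure for $C_{s_2,p_2}(K)$ satisfies $\mu(B_r(x))\lesssim r^{n-s_2p_2}$ on its support, because $\mathbf W^{\mu}_{s_2,p_2}\le1$ there. Raising this bounded ratio to the larger power $1/(p_1-1)$ can only decrease it. The paper uses the same measure in the strict case too, obtaining $C_{s_1,p_1}(K)\gtrsim C_{s_2,p_2}(K)$ for every compact $K\subset\overline{B_1}$ without Hausdorff contents or Frostman's lemma.
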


We refer to Section \ref{sec:examples} for examples illustrating the various relations between capacity density conditions and other thickness conditions.\\

This article is structured as follows. In Section~\ref{sec-prelim}, we introduce the notion of weak solutions and the $(s,p)$-capacity. In Section~\ref{sec-3}, we analyze weak boundary conditions on arbitrary sets and record certain localized solution estimates up to the boundary. The proofs of Theorem~\ref{thm-main} and Corollary~\ref{cor-main-2} are contained in Section~\ref{sec:Wiener}, and those of Theorems~\ref{thm-CDC} and \ref{thm-holder} are given in Section~\ref{sec:CDC}. Finally, in Section~\ref{sec:examples}, we present several examples of domains and prove Theorem~\ref{thm-CDC-comparison}.

\subsection*{Acknowledgments}
Minhyun Kim was supported by the National Research Foundation of Korea (NRF) grant funded by the Korean government (MSIT) (RS-2026-25481961).
Se-Chan Lee was supported by the KIAS Individual Grant (No. MG099001) at the Korea Institute for Advanced Study.
Marvin Weidner was supported by the Deutsche Forschungsgemeinschaft (DFG,
German Research Foundation) under Germany's Excellence Strategy - EXC-2047/1 - 390685813 and through the CRC 1720 ``Analysis of criticality: from complex phenomena to models and estimates'', 53930965.

\section{Preliminaries}
\label{sec-prelim}

In this section we recall several definitions of function spaces, weak solutions, and capacities. Throughout the paper, we assume that $0<s<1<p<\infty$, $n \in \mathbb{N}$, $\Lambda \geq 1$, that $\mathcal{L}$ is an operator given by \eqref{eq-L} with a measurable kernel $k$ satisfying \eqref{eq-k}, and that $\Omega$ is a nonempty open subset of $\mathbb{R}^n$. In Section~\ref{sec:examples}, the local case $s=1$ is also considered. Moreover, the letters $c$ and $C$ denote positive constants that may change from line to line. By $A \lesssim B$ (resp.\ $A\gtrsim B$) it means that $A \leq CB$ (resp.\ $A \geq CB$) for some constant $C>0$ independent of the quantities $A$ and $B$. Also, $A \eqsim B$ means that $A \lesssim B$ and $A \gtrsim B$.

The fractional Sobolev space $W^{s, p}(\Omega)$ is the space of all functions in $L^p(\Omega)$ such that $\|u\|_{W^{s, p}(\Omega)} < \infty$, where
\begin{equation*}
\|u\|_{W^{s, p}(\Omega)}^p \coloneqq \|u\|_{L^p(\Omega)}^p + [u]_{W^{s, p}(\Omega)}^p \coloneqq \int_{\Omega} |u(x)|^p \,\mathrm{d}x + \int_{\Omega} \int_{\Omega} \frac{|u(x)-u(y)|^p}{|x-y|^{n+sp}} \,\mathrm{d}y \,\mathrm{d}x.
\end{equation*}
We denote by $W^{s, p}_{\mathrm{loc}}(\Omega)$ the space of functions that belong to $W^{s, p}(\Omega')$ for every open set $\Omega' \Subset \Omega$.

We also utilize the space
\begin{equation*}
V^{s, p}(\Omega) \coloneqq \left\{ u: \mathbb{R}^n \to \mathbb{R}: \|u\|_{V^{s, p}(\Omega)} < \infty \right\},
\end{equation*}
where
\begin{equation*}
\|u\|_{V^{s, p}(\Omega)}^p \coloneqq \|u\|_{L^p(\Omega)}^p + [u]_{V^{s, p}(\Omega)}^p \coloneqq \int_{\Omega} |u(x)|^p \,\mathrm{d}x + \int_{\Omega} \int_{\mathbb{R}^n} \frac{|u(x)-u(y)|^p}{|x-y|^{n+sp}} \,\mathrm{d}y \,\mathrm{d}x,
\end{equation*}
and
\begin{equation*}
V^{s, p}_0(\Omega) \coloneqq \overline{C_c^\infty(\Omega)}^{V^{s, p}(\Omega)}.
\end{equation*}

We recall the tail space
\begin{equation*}
L^{p-1}_{sp}(\mathbb{R}^n) \coloneqq \left\{ u \in L^{p-1}_{\mathrm{loc}}(\mathbb{R}^n): \int_{\mathbb{R}^n} \frac{|u(x)|^{p-1}}{(1+|x|)^{n+sp}} \,\mathrm{d}x < \infty \right\}.
\end{equation*}
If $u \in L^{p-1}_{sp}(\mathbb{R}^n)$, then the nonlocal tail
\begin{equation*}
\mathrm{Tail}(u; x_0, r) = \left( r^{sp} \int_{\mathbb{R}^n \setminus B_r(x_0)} \frac{|u(x)|^{p-1}}{|x-x_0|^{n+sp}} \,\mathrm{d}x \right)^{1/(p-1)}
\end{equation*}
is finite for every $x_0 \in \mathbb{R}^n$ and $r>0$.

Note that
\begin{equation*}
\mathrm{Lip}_c(\Omega) \subset V^{s, p}_0(\Omega) = \overline{C_c^\infty(\Omega)}^{W^{s, p}(\mathbb{R}^n)} \subset V^{s, p}(\Omega) \subset W^{s, p}_{\mathrm{loc}}(\Omega) \cap L^{p-1}_{sp}(\mathbb{R}^n),
\end{equation*}
and that
\begin{equation}\label{eq-V-ae}
V^{s, p}_0(\Omega) \subset \{ u \in V^{s, p}(\Omega): u=0 \text{ a.e.\ on } \mathbb{R}^n \setminus \Omega \}.
\end{equation}
The inclusion in \eqref{eq-V-ae} may be strict in general; see \cite{FSV15}. This distinction is important on rough domains. We refer the reader to Section~\ref{sec-3} for a more detailed discussion.

\begin{definition}\label{def-weak-soln}
A function $u \in W^{s, p}_{\mathrm{loc}}(\Omega) \cap L^{p-1}_{sp}(\mathbb{R}^n)$ is a {\it weak solution} (resp.\ \emph{weak supersolution}, \emph{weak subsolution}) of
\begin{equation}\label{eq-Lu=0}
\mathcal{L}u=0
\end{equation}
in $\Omega$ if
\begin{equation}\label{eq-weak}
\int_{\mathbb{R}^n} \int_{\mathbb{R}^n} |u(x)-u(y)|^{p-2}(u(x)-u(y))(\varphi(x)-\varphi(y)) k(x, y) \,\mathrm{d}y \,\mathrm{d}x \geq 0
\end{equation}
for all (resp.\ nonnegative, nonpositive) $\varphi \in C_c^{\infty}(\Omega)$.
\end{definition}

A function is a weak solution if and only if it is both a weak subsolution and a weak supersolution; see \cite[Corollary~2.3]{BBK24}. Moreover, if $u \in V^{s, p}(\Omega)$, then $u$ is a weak solution (resp.\ weak supersolution, weak subsolution) if and only if \eqref{eq-weak} holds for all (resp.\ nonnegative, nonpositive) $\varphi \in V^{s, p}_0(\Omega)$; see \cite[Proposition~2.4]{BBK24}.

We next recall the notions of capacities used in the sequel.

\begin{definition}
The \emph{condenser capacity} of a compact set $K \subset \Omega$ is defined by
\begin{equation*}
\mathrm{cap}_{s, p}(K, \Omega) = \inf_u {[u]_{W^{s, p}(\mathbb{R}^n)}^p},
\end{equation*}
where the infimum is taken over all $u \in C^\infty_c(\Omega)$ such that $u \geq 1$ on $K$.
\end{definition}

It is clear from the definition that $\mathrm{cap}_{s, p}(\cdot, \Omega)$ is monotone increasing and $\mathrm{cap}_{s, p}(K, \cdot)$ is monotone decreasing with respect to set inclusion.

\begin{definition}
The \emph{Sobolev capacity} of a compact set $K \subset \mathbb{R}^n$ is defined by
\begin{equation*}
C_{s, p}(K) = \inf_u {\|u\|_{W^{s, p}(\mathbb{R}^n)}^p},
\end{equation*}
where the infimum is taken over all $u \in C^\infty_c(\mathbb{R}^n)$ such that $u \geq 1$ on $K$.
\end{definition}

It is also clear from the definition that $C_{s, p}$ is monotone increasing with respect to set inclusion. Also, both capacities $\mathrm{cap}_{s,p}$ and $C_{s,p}$ are countably subadditive; see \cite[Proposition~3.7]{BBK25}. We also recall several properties of these capacities for the reader's convenience.

\begin{lemma}[{\cite[Lemma~2.17]{KLL23}}]\label{lem-cap}
If $0<r \leq R/2$, then
\begin{equation*}
\mathrm{cap}_{s,p}(\overline{B_r(x_0)}, B_R(x_0)) \eqsim 
\begin{cases}
r^{n-sp}  &\text{if } n>sp, \\
R^{n-sp} &\text{if } n<sp, \\
(\log(R/r))^{1-p}  &\text{if } n=sp,
\end{cases}
\end{equation*}
where the comparable constants depend only on $n$, $s$, and $p$.
\end{lemma}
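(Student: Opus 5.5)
The plan is to prove the comparability by scaling and translation invariance, followed by explicit test functions for the upper bounds and a Sobolev-type or scaling argument for the lower bounds. First, reduce to $x_0=0$. The seminorm satisfies $[u(\lambda\cdot)]^p_{W^{s,p}} = \lambda^{sp-n}[u]^p_{W^{s,p}}$, so
\begin{equation*}
\mathrm{cap}_{s,p}(\overline{B_r},B_R)=r^{n-sp}\,\mathrm{cap}_{s,p}(\overline{B_1},B_{R/r}).
\end{equation*}
Setting $t=R/r\ge 2$, it therefore suffices to show that $\phi(t):=\mathrm{cap}_{s,p}(\overline{B_1},B_t)$ satisfies $\phi(t)\eqsim 1$ if $n>sp$, $\phi(t)\eqsim t^{n-sp}$ if $n<sp$, and $\phi(t)\eqsim(\log t)^{1-p}$ if $n=sp$. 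By monotonicity, $\phi$ is nonincreasing in $t$.

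\textbf{Upper bounds.} These come from explicit test functions, which we mollify and approximate to land in $C_c^\infty(B_t)$.
\begin{itemize}
\item For $n>sp$, a fixed cutoff equal to $1$ on $B_1$ and supported in $B_2$ gives $\phi(t)\le\phi(2)\le C$.
\item For $n<sp$, take $u(x)=\psi(|x|/t)$ with $\psi=1$ on $[0,1/2]$, giving the bound $Ct^{n-sp}$.
\item For $n=sp$, use the logarithmic profile $u(x)=\min\{1,\log(t/|x|)/\log t\}_+$. Its gradient is $\lesssim 1/(|x|\log t)$ on the annulus $1<|x|<t$. Split the double integral into $|x-y|<|x|/2$, where the gradient bound applies, and the complement, where $|u(x)-u(y)|\le$ differences of logs. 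Dyadically, each annulus $\{2^k\le|x|<2^{k+1}\}$ then contributes $\lesssim (\log t)^{-p}$ for $k\lesssim\log t$, for a total of $\lesssim(\log t)^{1-p}$. This dyadic bookkeeping, especially the long-range interactions between annuli, is the most technical part. I would handle it with the elementary bound $|\log a-\log b|\le \log(1+|a-b|/\min(a,b))$ and summation over pairs of annuli.
\end{itemize}

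\textbf{Lower bounds.}
\begin{itemize}
\item For $n>sp$, apply the fractional Sobolev inequality $\|u\|_{L^{p^*}}\le C[u]_{W^{s,p}}$ to any admissible $u$. Since $u\ge1$ on $B_1$, this gives $[u]^p\ge c|B_1|^{p/p^*}$.
\item For $n<sp$, use the fractional Morrey/Hölder embedding, $|u(x)-u(y)|\le C[u]_{W^{s,p}}|x-y|^{s-n/p}$. Compare a point in $B_1$, where $u\ge1$, with a point outside $B_t$, where $u=0$, at distance $\le 2t$. This gives $[u]^p\gtrsim t^{n-sp}$.
\item For $n=sp$, a direct embedding fails, so I would use a dyadic chain argument. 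Let $a_k$ be the average of $u$ over the annulus $A_k=B_{2^{k+1}}\setminus B_{2^k}$, for $k=0,\dots,N\approx\log_2 t$, with $a_{-1}$ the average over $B_1$ (at least $1$) and $a_N=0$ on $B_{2t}\setminus B_t$. Scale-invariant Poincaré on neighboring annuli gives $|a_{k+1}-a_k|^p\lesssim [u]^p_{W^{s,p}(A_k\cup A_{k+1})}$, using $n=sp$. Then Hölder's inequality yields
\begin{equation*}
1\le\sum_k|a_{k+1}-a_k|\le N^{(p-1)/p}\Big(\sum_k[u]^p_{A_k\cup A_{k+1}}\Big)^{1/p},
\end{equation*}
so $[u]^p\gtrsim N^{1-p}\eqsim(\log t)^{1-p}$. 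Finite overlap of the sets $A_k\cup A_{k+1}$ lets us bound the sum by $2[u]^p$.
\end{itemize}

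The constants depend only on $n,s,p$, which completes the plan. The main obstacle is the critical case $n=sp$: both the logarithmic test-function computation and the chain lower bound. For that case I would lean on the dyadic decomposition described above.
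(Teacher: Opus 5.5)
The paper does not prove this lemma; it quotes it from \cite[Lemma~2.17]{KLL23}, so there is no in-text argument to compare against. Your proposal is a correct, self-contained proof.

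The scaling identity reduces everything to $\phi(t)=\mathrm{cap}_{s,p}(\overline{B_1},B_t)$ with $t\ge 2$. The upper bounds come from a fixed cutoff, a cutoff rescaled to $B_t$, and the logarithmic profile. The lower bounds come from the fractional Sobolev inequality for $n>sp$, the homogeneous fractional Morrey estimate for $n<sp$, and the dyadic chain with H\"older for $n=sp$. All of these give the stated orders with constants depending only on $n,s,p$. Compared with quoting, your route is elementary and shows exactly where criticality enters. The cross term between adjacent annuli is scale-invariant precisely when $n=sp$, and summing about $\log t$ such terms produces $(\log t)^{1-p}$.

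A few steps can be streamlined.

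\textbf{Critical lower bound.} The ``Poincar\'e inequality'' you need is just Jensen's inequality. Set $A_{-1}\coloneqq B_1$ and use $|x-y|\le 2^{k+3}$ on $A_k\times A_{k+1}$ to get
\[
|a_k-a_{k+1}|^p\le\fint_{A_k}\fint_{A_{k+1}}|u(x)-u(y)|^p\,\mathrm{d}y\,\mathrm{d}x\lesssim 2^{k(sp-n)}\int_{A_k}\int_{A_{k+1}}\frac{|u(x)-u(y)|^p}{|x-y|^{n+sp}}\,\mathrm{d}y\,\mathrm{d}x .
\]
This needs no connectedness of $A_k\cup A_{k+1}$, which matters when $n=1$. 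The sets $A_k\times A_{k+1}$ are pairwise disjoint, so the sum is bounded by $[u]^p_{W^{s,p}(\mathbb{R}^n)}$ directly. Take $N=\lceil\log_2 t\rceil$ so that $u\equiv0$ on $A_N$.

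\textbf{Critical upper bound.} The near-diagonal piece converges because $p=n/s>n$. The long-range piece needs no pairwise annulus bookkeeping:
\begin{itemize}
\item On $\{|x-y|\ge|x|/2\}$ you have $|x-y|\ge\max\{|x|,|y|\}/3$.
\item Clamping $\log\rho$ to $[0,\log t]$ is $1$-Lipschitz, so $|u(x)-u(y)|\le\min\{1,\log(|x|/|y|)/\log t\}$ whenever $|y|\le|x|$.
\item The weight $\max\{|x|,|y|\}^{-2n}$ is symmetric, so you may integrate over $|y|\le|x|$ only. This gives $\lesssim|x|^{n}(\log t)^{-p}$ for $1<|x|\le t$.
\item For $|x|>t$, where $u(x)=0$, it gives $\lesssim t^{n}(\log t)^{-p}$.
\item Integrating these against $|x|^{-2n}$ yields $\lesssim(\log t)^{1-p}$.
\end{itemize}

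\textbf{Approximation.} The logarithmic profile is supported on $\overline{B_t}$ and equals $1$ only on $\overline{B_1}$, so mollifying it directly breaks both constraints. For $t\ge16$, mollify instead $\min\{1,\log(t/(2|x|))/\log(t/4)\}_+$, which equals $1$ on $B_2$ and is supported in $\overline{B_{t/2}}$. For $2\le t<16$, use $\phi(t)\le\phi(2)$. Mollification does not increase $[\,\cdot\,]_{W^{s,p}(\mathbb{R}^n)}$.
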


\begin{lemma}[{\cite[Proposition~5.4]{BBK24}}]
Assume that $\Omega$ is bounded. Let $K \subset \Omega$ be compact. Then
\begin{equation*}
\frac{C_{s, p}(K)}{C(1+(\mathrm{diam}\,\Omega)^{sp})} \leq \mathrm{cap}_{s, p}(K, \Omega) \leq C \left( 1 + \frac{1}{\mathrm{dist}(K, \Omega^c)^p} \right) C_{s, p}(K),
\end{equation*}
where $C=C(n, s, p)>0$. In particular, $C_{s, p}(K)=0$ if and only if $\mathrm{cap}_{s, p}(K, \Omega)=0$.
\end{lemma}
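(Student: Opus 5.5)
The statement is quoted from \cite[Proposition~5.4]{BBK24}, so we could simply cite it. Still, here is a direct argument: each inequality comes from turning an admissible function for one capacity into an admissible function for the other. The left inequality uses a fractional Poincaré inequality for functions vanishing outside the bounded set $\Omega$. The right inequality uses multiplication by a cutoff adapted to $\mathrm{dist}(K,\Omega^c)$. Let $d=\mathrm{diam}\,\Omega$ and $\delta=\mathrm{dist}(K,\Omega^c)>0$.

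\emph{Lower bound.} Fix $u\in C_c^\infty(\Omega)$ with $u\ge 1$ on $K$. Then $u$ is also admissible for $C_{s,p}(K)$, so it suffices to show
\[
\|u\|_{L^p(\mathbb{R}^n)}^p\le C d^{sp}[u]_{W^{s,p}(\mathbb{R}^n)}^p .
\]
For $x\in\Omega$ we have $\Omega\subset \overline{B_d(x)}$, so the annulus $A_x=B_{2d}(x)\setminus \overline{B_d(x)}$ lies in $\Omega^c$. There $u=0$ and $|A_x|\eqsim d^n$. Averaging over $A_x$ and using $|x-y|\le 2d$ for $y\in A_x$ gives
\[
|u(x)|^p=\frac{1}{|A_x|}\int_{A_x}|u(x)-u(y)|^p\,\mathrm{d}y\le C d^{sp}\int_{\mathbb{R}^n}\frac{|u(x)-u(y)|^p}{|x-y|^{n+sp}}\,\mathrm{d}y .
\]
Integrating over $x\in\Omega$ (recall $u=0$ off $\Omega$) yields the displayed bound. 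Hence $C_{s,p}(K)\le (1+Cd^{sp})[u]^p_{W^{s,p}(\mathbb{R}^n)}$, and taking the infimum over $u$ gives the left inequality.

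\emph{Upper bound.} Fix $v\in C_c^\infty(\mathbb{R}^n)$ with $v\ge1$ on $K$. Choose $\eta\in C_c^\infty(\Omega)$ with $0\le\eta\le1$, $\eta=1$ on $K$, $\supp\eta$ contained in the $\delta/2$-neighborhood of $K$, and $|\nabla\eta|\le C/\delta$. Then $\eta v\in C_c^\infty(\Omega)$ is admissible for $\mathrm{cap}_{s,p}(K,\Omega)$. Write
\[
\eta v(x)-\eta v(y)=\eta(x)\bigl(v(x)-v(y)\bigr)+v(y)\bigl(\eta(x)-\eta(y)\bigr),
\]
and use $\eta\le 1$. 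This gives
\[
[\eta v]^p_{W^{s,p}(\mathbb{R}^n)}\le C[v]^p_{W^{s,p}(\mathbb{R}^n)}+C\int_{\mathbb{R}^n}|v(y)|^p\int_{\mathbb{R}^n}\frac{|\eta(x)-\eta(y)|^p}{|x-y|^{n+sp}}\,\mathrm{d}x\,\mathrm{d}y .
\]
The inner integral is bounded by $\int_{\mathbb{R}^n}\min\{(|h|/\delta)^p,1\}\,|h|^{-n-sp}\,\mathrm{d}h$. Splitting at $|h|=\delta$, this equals $C\delta^{-sp}$, and $\delta^{-sp}\le 1+\delta^{-p}$ since $s<1$. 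Therefore
\[
\mathrm{cap}_{s,p}(K,\Omega)\le C\bigl(1+\delta^{-p}\bigr)\|v\|^p_{W^{s,p}(\mathbb{R}^n)},
\]
and taking the infimum over $v$ gives the right inequality.

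\emph{Conclusion and main point.} The final claim about null sets follows from the two-sided bound, since both constants are finite and positive; here $\delta>0$ because $K$ is compact and $\Omega$ is open. The step needing the most care is the lower bound. There one must obtain a Poincaré constant scaling like $d^{sp}$ without any regularity assumption on $\Omega$. The annulus trick does this, because boundedness of $\Omega$ alone guarantees a large piece of $\Omega^c$ at distance comparable to $d$ from every point of $\Omega$.
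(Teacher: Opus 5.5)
Your proof is correct. The paper gives no argument of its own here and simply quotes \cite[Proposition~5.4]{BBK24}, so your proposal is a self-contained replacement for that citation rather than a reconstruction of a proof in the text.

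\emph{Lower bound.} Averaging over the annulus $B_{2d}(x)\setminus\overline{B_d(x)}\subset\Omega^c$ gives the Poincar\'e-type bound $\|u\|_{L^p(\mathbb{R}^n)}^p\le Cd^{sp}[u]_{W^{s,p}(\mathbb{R}^n)}^p$. This uses nothing but the boundedness of $\Omega$. Integrating over all of $\mathbb{R}^n\setminus\overline{B_d(x)}$ would work just as well.

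\emph{Upper bound.} In the cutoff estimate, splitting at $|h|=\delta$ is exactly where $s<1$ enters. Your route actually gives the slightly sharper factor $1+\delta^{-sp}$. The factor $1+\delta^{-p}$ in the statement is what you get by splitting the same integral at $|h|=1$ instead. For the paper's purposes the two are interchangeable, since the lemma is only used at comparable scales and for the null-set equivalence.

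\emph{One caveat about the paper's later use.} In Section~\ref{sec:examples} the paper applies this comparison also with $s_i=1$. There your computations do not apply verbatim, because the seminorm is $\|\nabla u\|_{L^p}^p$. The same two steps still go through, but with the classical Poincar\'e inequality $\|u\|_{L^p}\le Cd\|\nabla u\|_{L^p}$ for $u\in C_c^\infty(\Omega)$ and the product rule $\nabla(\eta v)=\eta\nabla v+v\nabla\eta$. This yields the same two-sided bound.
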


\begin{lemma}[{\cite[Lemma~5.5]{BBK24}}]
Assume that $\Omega$ is bounded. Let $x_0 \in \Omega$. Then the following are equivalent:
\begin{enumerate}[\textup(a\textup)]
\item
$sp \leq n$,
\item
$C_{s, p}(\{x_0\})=0$,
\item
$\mathrm{cap}_{s, p}(\{x_0\}, \Omega)=0$.
\end{enumerate}
\end{lemma}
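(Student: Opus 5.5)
The statement immediately above is the cited Lemma (\cite[Lemma~5.5]{BBK24}): for bounded $\Omega$ and $x_0\in\Omega$, $sp\le n$ is equivalent to $C_{s,p}(\{x_0\})=0$, and to $\mathrm{cap}_{s,p}(\{x_0\},\Omega)=0$. I would first dispose of (b)$\Leftrightarrow$(c) directly from the preceding lemma (\cite[Proposition~5.4]{BBK24}). Its hypotheses hold with $K=\{x_0\}$, which is compact with $\mathrm{dist}(K,\Omega^c)>0$ since $x_0\in\Omega$ and $\Omega$ is open. Hence the two capacities are comparable up to finite positive constants, and so they vanish simultaneously. It remains to prove (a)$\Leftrightarrow$(c).

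For this I would work with the condenser capacity relative to balls. Choose $\rho>0$ with $B_{2\rho}(x_0)\subset\Omega$ and $\Omega\subset B_{R_1}(x_0)$ for some $R_1$, which is possible because $\Omega$ is bounded. By monotonicity in both arguments, for every $0<r\le\rho$,
\[
\mathrm{cap}_{s,p}(\overline{B_{R_1}(x_0)}\cap\{x_0\},B_{2R_1}(x_0))\le \mathrm{cap}_{s,p}(\{x_0\},\Omega)\le \mathrm{cap}_{s,p}(\overline{B_r(x_0)},B_{2\rho}(x_0)).
\]
The left-hand term is simply $\mathrm{cap}_{s,p}(\{x_0\},B_{2R_1}(x_0))$. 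Both outer quantities can then be controlled by Lemma~\ref{lem-cap}.

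\emph{If $sp<n$:} the upper bound is $\lesssim r^{n-sp}\to0$ as $r\to0$.

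\emph{If $sp=n$:} the upper bound is $\lesssim(\log(2\rho/r))^{1-p}\to0$, because $p>1$.

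In both cases $\mathrm{cap}_{s,p}(\{x_0\},\Omega)=0$, which proves (a)$\Rightarrow$(c).

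\emph{If $sp>n$:} for (c)$\Rightarrow$(a) I need a positive lower bound. Here Lemma~\ref{lem-cap} gives $\mathrm{cap}_{s,p}(\overline{B_r(x_0)},B_{2R_1}(x_0))\eqsim R_1^{n-sp}$, uniformly in $r\le R_1$. The difficulty is passing from balls to the point itself, since monotonicity goes the wrong way. I would not use the ball estimate for this. Instead I would invoke the fractional Morrey embedding $W^{s,p}(\mathbb{R}^n)\hookrightarrow C^{0,s-n/p}$, valid because $sp>n$. Take any test function $u\in C_c^\infty(B_{2R_1})$ with $u(x_0)\ge1$. Since $u$ vanishes at some point $y$ with $|y-x_0|\le 2R_1$, Morrey's inequality gives
\[
1\le|u(x_0)-u(y)|\le C[u]_{W^{s,p}(\mathbb{R}^n)}R_1^{s-n/p}.
\]
Therefore $[u]^p\gtrsim R_1^{n-sp}>0$, which yields $\mathrm{cap}_{s,p}(\{x_0\},\Omega)>0$. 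This proves (c)$\Rightarrow$(a) by contraposition.

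The only nontrivial ingredient is this Morrey-type estimate in Gagliardo-seminorm form. I expect it to be the main obstacle if it must be proved from scratch. The standard Campanato argument would do it: compare averages over dyadic balls and sum the geometric series, using $sp>n$ for convergence.
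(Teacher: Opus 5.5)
Your proof is correct. The paper gives no proof of this statement; it is quoted from \cite[Lemma~5.5]{BBK24}. Your argument is therefore a valid self-contained derivation from the two results recorded just before it, plus a Morrey inequality.

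\begin{itemize}
\item (b)$\Leftrightarrow$(c) is exactly the ``in particular'' clause of \cite[Proposition~5.4]{BBK24}.
\item (a)$\Rightarrow$(c) follows from Lemma~\ref{lem-cap} with $R=2\rho$ and $r\to 0$. In the borderline case $sp=n$ this uses $p>1$.
\item (c)$\Rightarrow$(a): your seminorm form of the fractional Morrey inequality is sound. The Campanato proof you sketch works for $u\in C_c^\infty(\mathbb{R}^n)$ with a constant depending only on $n,s,p$. It gives the quantitative bound $\mathrm{cap}_{s,p}(\{x_0\},\Omega)\gtrsim R_1^{n-sp}$.
\end{itemize}

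You could avoid Morrey altogether. The obstacle you identify, that monotonicity in the first argument goes the wrong way, disappears because admissible test functions are continuous. Let $u\in C_c^\infty(B_{2R_1}(x_0))$ with $u(x_0)\ge 1$. For each $\varepsilon>0$, the function $(1+\varepsilon)u$ is $\ge 1$ on $\overline{B_r(x_0)}$ for some $r\le R_1$. Lemma~\ref{lem-cap} in the case $n<sp$ then gives
\[
(1+\varepsilon)^p[u]^p_{W^{s,p}(\mathbb{R}^n)}\ge \mathrm{cap}_{s,p}(\overline{B_r(x_0)},B_{2R_1}(x_0))\gtrsim R_1^{n-sp}.
\]
Letting $\varepsilon\to0$ and taking the infimum over $u$ yields the same lower bound. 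This keeps the whole proof within the lemmas the paper records.

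The uniform-in-$r$ lower bound of Lemma~\ref{lem-cap} for $sp>n$ is itself a Morrey-type fact. So the two routes differ mainly in what is taken as a black box, not in the underlying mechanism.
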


\section{Localized Dirichlet problems}\label{sec-3}

In this section, we introduce weak boundary conditions on arbitrary sets and use them to formulate localized Dirichlet problems. We also record the local estimates up to the boundary that will be needed in the sequel.

\subsection{Weak Dirichlet values}

The space $V^{s, p}_0(\Omega)$ is suitable for global Dirichlet problems, where the exterior Dirichlet condition is imposed on $\mathbb{R}^n \setminus \Omega$. For the localized Dirichlet problem \eqref{eq-loc-DP}, we need a corresponding notion for a portion of the exterior, such as $B_R(x_0) \setminus \Omega$.

Throughout this subsection, $D \subset \mathbb{R}^n$ denotes a nonempty bounded open set and $T \subset \mathbb{R}^n$ denotes a measurable set. In the applications below, we will typically take
\begin{equation}\label{eq-DT}
D = \Omega \cap B_R(x_0) \quad\text{and}\quad T = B_R(x_0) \setminus \Omega.
\end{equation}

We define the \emph{weak vanishing space} on $T$ relative to $D$ by
\begin{equation*}
Z^{s, p}_T(D) \coloneqq \overline{\mathrm{Lip}_c(\mathbb{R}^n \setminus T)}^{V^{s, p}(D)}.
\end{equation*}
We interpret $Z^{s, p}_T(D)$ as the space of functions that vanish on $T$ in the $V^{s,p}(D)$-sense. If $T=\mathbb{R}^n \setminus D$, then $Z^{s, p}_T(D) = V^{s, p}_0(D)$. It is straightforward to check that
\begin{equation}\label{eq-Z-ae}
Z^{s, p}_T(D) \subset \{u \in V^{s, p}(D): u=0 \text{ a.e.\ on }T\}.
\end{equation}
The inclusion in \eqref{eq-Z-ae} may be strict. In particular, when $T=\mathbb R^n\setminus D$, this reduces to the possible strict inclusion discussed in \eqref{eq-V-ae}.

\begin{definition}\label{def-weak-bdry}
Let $u, v \in V^{s, p}(D)$. We say that \emph{$u=v$ on $T$ in the $V^{s, p}(D)$-sense} if
\begin{equation*}
u-v \in Z^{s, p}_T(D).
\end{equation*}
\end{definition}

We define weak solutions to the localized Dirichlet problem \eqref{eq-loc-DP} by combining the weak formulation of the equation in Definition~\ref{def-weak-soln} with the weak interpretation of the exterior condition in Definition~\ref{def-weak-bdry}.

\begin{definition}\label{def-weak-DP}
Let $D$ and $T$ be given as in \eqref{eq-DT}, and assume $g \in V^{s, p}(D)$. We say that $u \in V^{s, p}(D)$ is a \emph{weak solution to the localized Dirichlet problem \eqref{eq-loc-DP}} if $u$ is a weak solution to $\mathcal{L}u=0$ in $D$ and $u=g$ on $T$ in the $V^{s, p}(D)$-sense.
\end{definition}

We extend Definition~\ref{def-weak-bdry} to the weak inequalities and weak extrema on $T$.

\begin{definition}
Let $u, v \in V^{s, p}(D)$ and let $T \subset \mathbb{R}^n$ be measurable.
\begin{enumerate}[(a)]
\item
We say that $u \leq v$ on $T$ in the $V^{s, p}(D)$-sense if
\begin{equation*}
(u-v)_+ \in Z^{s, p}_T(D).
\end{equation*}
\item
We define the \emph{weak supremum} and \emph{weak infimum} of $u$ on $T$ relative to $D$ by
\begin{equation*}
\wsup_{T; D} u \coloneqq \inf {\left\{ l \in \mathbb{R}: (u-l)_+ \in Z^{s, p}_T(D) \right\}} \quad\text{and}\quad \winf_{T; D} u \coloneqq - \wsup_{T; D} {(-u)},
\end{equation*}
respectively. When the underlying set $D$ is $\Omega$, we simply write $\wsup_T u = \wsup_{T; \Omega} u$ and $\winf_T u = \winf_{T; \Omega} u$. These extrema are understood as extended real numbers. 
\end{enumerate}
\end{definition}

The following lemma shows the relation between the essential supremum (resp.\ essential infimum) and the weak supremum (resp.\ weak infimum).

\begin{lemma}[Weak and essential suprema]\label{lem-wsup-esssup}
Let $u \in V^{s, p}(D)$. Then
\begin{equation}\label{eq-esssup-wsup}
\esssup_{T}{u} \leq \wsup_{T; D}{u}.
\end{equation}
The equality holds if $D$ and $T$ satisfy
\begin{equation}\label{eq-density}
Z^{s, p}_T(D) = \{u \in V^{s, p}(D): u=0 \text{ a.e.\ on }T\}.
\end{equation}
\end{lemma}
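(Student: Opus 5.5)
Both claims follow by unwinding the definition of the weak supremum, so I will argue directly from it. For \eqref{eq-esssup-wsup}, I may assume $\wsup_{T;D}u<\infty$, since otherwise there is nothing to prove. Fix any $l\in\mathbb{R}$ with $(u-l)_+\in Z^{s,p}_T(D)$. By the inclusion \eqref{eq-Z-ae}, every element of $Z^{s,p}_T(D)$ vanishes a.e.\ on $T$. Hence $(u-l)_+=0$ a.e.\ on $T$, that is, $u\le l$ a.e.\ on $T$, which gives $\esssup_T u\le l$. Taking the infimum over all admissible $l$ yields \eqref{eq-esssup-wsup}.

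For the equality under \eqref{eq-density}, I need the reverse inequality $\wsup_{T;D}u\le\esssup_T u$. If $\esssup_T u=+\infty$ there is nothing to show. If $T$ is a null set, $\esssup_T u=-\infty$. In that case $(u-l)_+=0$ a.e.\ on $T$ trivially for every $l$. Since $u\in V^{s,p}(D)$, the function $(u-l)_+$ also lies in $V^{s,p}(D)$, so by \eqref{eq-density} it belongs to $Z^{s,p}_T(D)$ for every $l\in\mathbb{R}$. Thus $\wsup_{T;D}u=-\infty$ as well.

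In the remaining case, take $l\in\mathbb{R}$ with $l\ge\esssup_T u$ (in fact any $l$ at least the essential supremum, when it is finite). Then $(u-l)_+=0$ a.e.\ on $T$. The one point needing a check is that $(u-l)_+\in V^{s,p}(D)$. This holds because truncation is $1$-Lipschitz, so
\[
|(u-l)_+(x)-(u-l)_+(y)|\le|u(x)-u(y)| \quad\text{and}\quad |(u-l)_+|\le|u|+|l| \text{ on } D,
\]
and $D$ is bounded, so constants lie in $L^p(D)$. Assumption \eqref{eq-density} then gives $(u-l)_+\in Z^{s,p}_T(D)$, so $l$ is admissible in the definition of $\wsup_{T;D}u$. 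Therefore $\wsup_{T;D}u\le l$, and letting $l\downarrow\esssup_T u$ gives the reverse inequality. There is no real obstacle here; the only care needed is the membership of truncations in $V^{s,p}(D)$ and the handling of the extended-real cases.
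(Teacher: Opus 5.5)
Your proposal is correct and follows the paper's proof: you get \eqref{eq-esssup-wsup} from \eqref{eq-Z-ae} applied to admissible levels, and the reverse inequality under \eqref{eq-density} by checking that $(u-l)_+\in V^{s,p}(D)$ vanishes a.e.\ on $T$, handling the extended-real cases separately. The only cosmetic difference is that you take any real $l\ge\esssup_T u$ directly instead of the paper's levels $L+\varepsilon$ with $\varepsilon\searrow 0$; this is equally valid since $u\le\esssup_T u$ a.e.\ on $T$.
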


\begin{proof}
The first assertion \eqref{eq-esssup-wsup} follows from \eqref{eq-Z-ae}. Indeed, if $l \in \mathbb{R}$ is an admissible level for $\wsup_{T; D}{u}$, then $(u-l)_+ \in Z^{s, p}_T(D)$. By \eqref{eq-Z-ae}, $(u-l)_+ =0$ a.e.\ on $T$ and therefore
\begin{equation*}
\esssup_T u \leq l.
\end{equation*}
Taking the infimum over all admissible levels $l$ yields \eqref{eq-esssup-wsup}.

Now we recall that $|D|<\infty$  and assume that $D$ satisfies \eqref{eq-density}. Let $L\coloneqq \esssup_T{u}$. If $L=\infty$, then \eqref{eq-esssup-wsup} implies that $\wsup_{T; D}{u}=\infty$, and the equality follows. Suppose that $L \in \mathbb{R}$. For every $\varepsilon>0$, we have
\begin{equation*}
v \coloneqq (u-(L+\varepsilon))_+=0 \quad\text{a.e.\ on }T.
\end{equation*}
Since $|D|<\infty$, it follows from
\begin{equation*}
0 \leq v \leq |u| + |L+\varepsilon| \in L^p(D)
\end{equation*}
that $v \in L^p(D)$. Moreover, since the map $t \mapsto (t-(L+\varepsilon))_+$ is Lipschitz, we obtain
\begin{equation*}
[v]_{V^{s, p}(D)} \leq [u]_{V^{s, p}(D)} < \infty.
\end{equation*}
Thus, $v \in V^{s, p}(D)$. By \eqref{eq-density}, we have $v \in Z^{s, p}_T(D)$, meaning that
\begin{equation*}
\wsup_{T;D}{u} \leq L+\varepsilon.
\end{equation*}
Letting $\varepsilon \searrow 0$ yields the equality.

Finally, suppose that $L=-\infty$. Then $u \leq l$ a.e.\ on $T$ for every $l \in \mathbb{R}$. As above, the assumption $|D|<\infty$ guarantees that $(u-l)_+ \in V^{s, p}(D)$, and hence by \eqref{eq-density}
\begin{equation*}
(u-l)_+ \in Z^{s, p}_T(D)
\end{equation*}
for every $l \in \mathbb{R}$. Thus every real level is admissible, and therefore
\begin{equation*}
\wsup_{T; D}u=-\infty=L.
\end{equation*}
This completes the proof.
\end{proof}

\subsection{Local estimates up to the boundary}

We record the local boundedness and weak Harnack inequality up to the boundary in the form needed below.

\begin{theorem}[Local boundedness up to the boundary]\label{thm-loc-bdd}
For $x_0 \in \partial \Omega$, let $D$ and $T$ be given as in \eqref{eq-DT}, and let $0 \leq M <\infty$. Suppose that $u \in V^{s, p}(D)$ is a weak subsolution to $\mathcal{L}u=0$ in $D$ such that
\begin{equation*}
u_+ \leq M \quad\text{on }T \text{ in the $V^{s, p}(D)$-sense}.
\end{equation*}
Then there exists a constant $C=C(n, s, p, \Lambda)>0$ such that
\begin{equation*}
\esssup_{B_{R/2}(x_0)} u_M^+ \leq C \left( \fint_{B_{R}(x_0)} u_M^+(x)^p \,\mathrm{d}x \right)^{1/p} + \mathrm{Tail}(u_M^+; x_0, R/2),
\end{equation*}
where $u_M^+ \coloneqq \max\{u_+, M\}$.
\end{theorem}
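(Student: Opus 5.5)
We prove Theorem~\ref{thm-loc-bdd} by the standard nonlocal De Giorgi iteration (as in \cite{DCKP16}), applied to the truncations $w_k \coloneqq (u-k)_+$ at levels $k \ge M$. The key observation concerns the boundary condition $u_+ \le M$ on $T$ in the $V^{s,p}(D)$-sense. It means $(u_+-M)_+ = (u-M)_+ \in Z^{s,p}_T(D)$, and this persists for all levels $k\ge M$, since $(u-k)_+ = ((u-M)_+ -(k-M))_+$ and the space $Z^{s,p}_T(D)$ is stable under composition with the Lipschitz map $t\mapsto (t-a)_+$, $a \ge 0$. For the latter, approximate by $\mathrm{Lip}_c(\mathbb{R}^n\setminus T)$ functions $\phi_j$; then $(\phi_j - a)_+$ is again Lipschitz with support in $\mathbb{R}^n\setminus T$, and it converges in $V^{s,p}(D)$ by continuity of truncations, possibly after passing to weak convergence and using Mazur's lemma. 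For a cutoff $\eta\in C_c^\infty(B_\rho(x_0))$ we then get $\eta^p w_k \in V^{s,p}_0(D)$, because $\eta^p w_k$ vanishes weakly on $T$ and is supported in $B_R(x_0)$, so it vanishes weakly on all of $\mathbb{R}^n\setminus D$. It is therefore an admissible test function for subsolutions by \cite[Proposition~2.4]{BBK24}.

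Testing the weak subsolution inequality with $\varphi=\eta^p w_k$ gives the usual Caccioppoli inequality up to the boundary:
\[
\int_{B_\rho}\int_{B_\rho}|w_k\eta(x)-w_k\eta(y)|^p\,\frac{\mathrm{d}x\,\mathrm{d}y}{|x-y|^{n+sp}}
\lesssim \int_{B_\rho}\int_{B_\rho}\frac{\max\{w_k(x),w_k(y)\}^p|\eta(x)-\eta(y)|^p}{|x-y|^{n+sp}}\,\mathrm{d}x\,\mathrm{d}y
+ \int_{B_\rho} w_k\eta^p \,\mathrm{d}x\,\sup_{x\in\operatorname{supp}\eta}\int_{\mathbb{R}^n\setminus B_\rho}\frac{w_k(y)^{p-1}}{|x-y|^{n+sp}}\,\mathrm{d}y .
\]
The computation is the same as in the interior case \cite{DCKP16}, since only the admissibility of the test function changed. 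Since $w_k \le (u_M^+ - k)_+$ whenever $k\ge M$, all quantities can be expressed through $v\coloneqq u_M^+$. The tail term is bounded by $\mathrm{Tail}(v;x_0,R/2)$ after the usual comparison $|y-x_0|\lesssim |y-x|$ for $x$ in the support of $\eta$.

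Next we run the De Giorgi scheme with radii $\rho_j = \frac R2(1+2^{-j})$ and levels $k_j = M + \tilde k + h(1-2^{-j})$, where $\tilde k$ is to be chosen. Combining the Caccioppoli inequality with the fractional Sobolev inequality applied to $w_{k_{j+1}}\eta_j$ (it has compact support in $B_{\rho_j}$, so the whole-space Sobolev inequality or a localized version applies) and Chebyshev's inequality on the superlevel sets yields the recursion
\[
Y_{j+1}\le C b^j\, \tilde k^{-\delta} Y_j^{1+\delta}, \qquad Y_j = \fint_{B_{\rho_j}} w_{k_j}^p \,\mathrm{d}x.
\]
The tail contribution is absorbed by taking $\tilde k \ge \mathrm{Tail}(v;x_0,R/2)$ and choosing $h \approx (\fint_{B_R} v^p)^{1/p}$ times a constant. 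The fast geometric convergence lemma then gives $u\le M+ C(\fint_{B_R} v^p)^{1/p}+\mathrm{Tail}(v;x_0,R/2)$ a.e.\ in $B_{R/2}$, and hence the same bound for $u_M^+$, because $M$ itself is bounded by $(\fint v^p)^{1/p}$ as $v\ge M$. Strictly speaking the estimate carries the extra additive $M$, which is controlled in exactly this way.

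The main obstacle is the first step: justifying that $\eta^p(u-k)_+$ is an admissible test function, i.e.\ the lattice stability of $Z^{s,p}_T(D)$ under truncation and multiplication by smooth cutoffs. The relevant point is that $\eta\phi_j$ stays in $\mathrm{Lip}_c(\mathbb{R}^n\setminus T)$ and that multiplication by $\eta$ is continuous on $V^{s,p}(D)$, using that $D$ is bounded. Everything after that is the interior argument verbatim.
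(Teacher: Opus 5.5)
Your argument is correct in substance, but it takes a different route from the paper. The paper does not redo an iteration. It defers to the Moser iteration of \cite[Theorem~3.5]{KLL23} and adds only two complements. The first is admissibility of the power-type test function $(\bar u^\beta-\bar M^\beta)\eta^p$, with $\bar u=u_M^++d$. This comes from Lemma~\ref{lem-test-fts}, under the extra assumption that $u$ is bounded, so that $t\mapsto(t+\bar M)^\beta-\bar M^\beta$ can be extended to a globally Lipschitz map vanishing at $0$. The second is a replacement of the Sobolev step when $sp>n$, using the embedding for $W^{\sigma,q}$ with $\sigma<s$, $q<p$ and $q^\ast_\sigma>p$.

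You instead run the De Giorgi scheme of \cite{DCKP16} with $\eta^p(u-k)_+$, $k\ge M$. Admissibility uses the same localization mechanism as Lemma~\ref{lem-test-fts}, but with the globally Lipschitz map $t\mapsto(t-(k-M))_+$. So you need no a priori boundedness of $u$ and no later approximation to remove it. The final absorption $M\le(\fint_{B_R}(u_M^+)^p)^{1/p}$, using $u_M^+\ge M$, is also clean. In exchange, the paper stays inside the Moser framework it needs anyway for the weak Harnack inequality (Theorem~\ref{thm-WHI}), so one admissibility argument serves both results.

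Two points need tightening.

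First, your level bookkeeping is wrong as written. In the Caccioppoli inequality the tail enters as $\int w_{k_{j+1}}\eta^p\,\mathrm{d}x$ times the supremum. The only way to turn $\int w_{k_{j+1}}$ into $Y_j$ is $w_{k_{j+1}}\le w_{k_j}^p/(k_{j+1}-k_j)^{p-1}$. So the tail is compared with the increment $h2^{-j-1}$, and a base shift $\tilde k$ plays no role. With $h\approx(\fint v^p)^{1/p}$ alone, the recursion fails whenever the tail dominates. Take instead $k_j=M+h(1-2^{-j})$ with $h=\mathrm{Tail}(v;x_0,R/2)+C(\fint_{B_R}v^p)^{1/p}$, as in \cite{DCKP16}. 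Since $\rho_j\ge R/2$, this gives exactly the stated estimate with coefficient $1$ on the tail.

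Second, for $sp\ge n$ there is no whole-space embedding of $W^{s,p}$ into $L^{p^\ast}$, so you must use the localized version explicitly. For example, $[f]_{W^{\sigma,p}(\mathbb{R}^n)}\le C\rho^{s-\sigma}[f]_{W^{s,p}(\mathbb{R}^n)}$ for $f$ supported in $B_\rho$ and $\sigma p<n$, which follows from the fractional Poincar\'e inequality. This gives the gain $\chi=n/(n-\sigma p)>1$ with correct scaling. It is the De Giorgi analogue of the paper's modification for $sp>n$.
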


\begin{theorem}[Weak Harnack inequality up to the boundary]\label{thm-WHI}
For $x_0 \in \partial \Omega$, let $D$ and $T$ be given as in \eqref{eq-DT}. Let $0\leq m<\infty$ and
\begin{equation*}
t \in
\begin{cases}
(0, \frac{n(p-1)}{n-sp}) &\text{if }sp<n, \\
(0, \infty) &\text{if }sp \geq n.
\end{cases}
\end{equation*}
Suppose that $u \in V^{s, p}(D)$ is a weak supersolution to $\mathcal{L}u=0$ in $D$ such that $u \geq 0$ a.e.\ in $B_R(x_0)$ and
\begin{equation*}
u \geq m \quad\text{on }T \text{ in the $V^{s, p}(D)$-sense}.
\end{equation*}
Then there exists a constant $C=C(n, s, p, \Lambda, t)>0$ such that
\begin{equation*}
\left( \fint_{B_{R/2}(x_0)} u_m^-(x)^t \,\mathrm{d}x \right)^{1/t} \leq C \essinf_{B_{R/4}(x_0)} u_m^- + C\,\mathrm{Tail}((u_m^-)_-; x_0, R),
\end{equation*}
where $u_m^- \coloneqq \min\{u, m\}$.
\end{theorem}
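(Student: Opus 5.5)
The statement to prove is the weak Harnack inequality up to the boundary, Theorem~\ref{thm-WHI}. The plan is to reduce it to the interior weak Harnack inequality for supersolutions of \cite{DCKP14}, applied to the truncation $w \coloneqq u_m^- = \min\{u, m\}$.

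The key observation is that $w$, extended appropriately, is a weak supersolution of $\mathcal{L}w = 0$ in the whole ball $B_R(x_0)$, not merely in $D = \Omega \cap B_R(x_0)$. To see this, I would take a nonnegative test function $\varphi \in C_c^\infty(B_R(x_0))$ and use the test functions
\begin{equation*}
\varphi_\varepsilon \coloneqq \varphi \, \min\{(m-u)_+/\varepsilon, 1\}.
\end{equation*}
Informally, $\varphi_\varepsilon$ is supported where $u<m$. Since $(m-u)_+ \in Z^{s,p}_T(D)$ by the weak boundary inequality $u \geq m$ on $T$, the function $\varphi_\varepsilon$ belongs to $Z^{s,p}_T(D)$ with compact support in $B_R(x_0)$. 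By \cite[Proposition~2.4]{BBK24}, adapted via density of $\mathrm{Lip}_c$ functions away from $T$, it is therefore an admissible test function for $u$ in $D$.

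There is a simpler route, the standard one for the $p$-Laplacian (cf.~\cite{KLL23}): the minimum of a supersolution and a constant is a supersolution wherever the constant is itself a supersolution. Here $w = \min\{u, m\}$ on $D$, while on $T$ we have $w = m$ in the weak sense. Concretely, I would test the equation for $u$ with $\eta = \varphi \cdot \chi$, where $\chi$ approximates the indicator of $\{u<m\}$ as above, and let $\varepsilon \to 0$. On the region $\{u \ge m\}$, $w$ equals the constant $m$, and in $\mathcal{L}w$ the nonlocal difference $w(x)-w(y)$ has $w(x) = m \geq w(y)$. That term contributes with the favourable sign, because $\varphi \ge 0$ and the integrand $|m-w(y)|^{p-2}(m-w(y))\varphi(x)$ is nonnegative. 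On $\{u<m\}$ the comparison $|a-b|^{p-2}(a-b)$ is monotone in $b$, so replacing $u(y)$ by $\min\{u(y),m\}$ only decreases the value. This gives $\mathcal{E}(w,\varphi) \ge 0$. The technical care is entirely in justifying the limit, using $w \in W^{s,p}(B_R)$, which follows from $u \in V^{s,p}(D)$ together with $(u-m)_- \in Z^{s,p}_T(D)$.

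With $w$ a nonnegative (since $u\ge0$ in $B_R$) weak supersolution in $B_R(x_0)$, I would apply the interior weak Harnack inequality \cite[Theorem~1.2]{DCKP14}, which holds in the stated exponent range $t$. It gives
\begin{equation*}
\Bigl(\fint_{B_{R/2}} w^t\Bigr)^{1/t} \le C \essinf_{B_{R/4}} w + C\,\mathrm{Tail}(w_-; x_0, R).
\end{equation*}
The ball sizes may need a routine covering adjustment.

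I expect the main obstacle to be the supersolution property of the truncation across the boundary portion $T$, specifically handling the weak boundary condition, which is only in the $Z^{s,p}_T$ sense, rather than a pointwise one, when constructing admissible test functions.
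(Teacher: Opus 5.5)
Your argument is correct, but it is organised differently from the paper's. The paper does not glue anything. It re-runs the boundary Moser iteration of \cite[Theorem~3.7]{KLL23} on $u_m^-$ itself and uses Lemma~\ref{lem-test-fts} only to certify that the Caccioppoli test functions $\eta^pF(\cdot)$ lie in $V^{s,p}_0(D)$. It then reads off the essential infimum at the end of the iteration and patches the Sobolev step when $sp>n$. You instead prove a pasting lemma, namely that $w=\min\{u,m\}$ is a weak supersolution in the whole ball $B_R(x_0)$, and then quote the interior weak Harnack inequality. The weak boundary condition is used exactly once: for the admissibility of $\varphi\min\{(m-u)_+/\varepsilon,1\}$, which is essentially Lemma~\ref{lem-test-fts} with $F(t)=\min\{t_+/\varepsilon,1\}$. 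No iteration has to be re-examined, so any interior result transfers for free.

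The price is that the pasting lemma carries all the difficulty, and your pointwise reasoning (treating $\mathcal{E}(w,\varphi)$, the left-hand side of \eqref{eq-weak}, as $2\int\varphi\,\mathcal{L}w$) is not a weak argument, since $\mathcal{E}(w,\varphi)$ cannot in general be split into separate $\varphi(x)$- and $\varphi(y)$-parts. It does go through, as follows. Write $J(t)=|t|^{p-2}t$, $\chi_\varepsilon=\min\{(m-u)_+/\varepsilon,1\}$ and $A=\{u\ge m\}$. In $\mathcal{E}(u,\varphi\chi_\varepsilon)\ge 0$, split $\varphi\chi_\varepsilon(x)-\varphi\chi_\varepsilon(y)=\chi_\varepsilon(x)(\varphi(x)-\varphi(y))+\varphi(y)(\chi_\varepsilon(x)-\chi_\varepsilon(y))$. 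Against $J(u(x)-u(y))$ the second part is pointwise nonpositive. Dominated convergence on the first part and Fatou on the second then give, as $\varepsilon\to 0$,
\[
\iint_{A^c\times A^c}J(u(x)-u(y))(\varphi(x)-\varphi(y))k(x,y)\,\mathrm{d}y\,\mathrm{d}x \;\ge\; 2\iint_{A^c\times A}|J(u(x)-u(y))|\,\varphi(x)\,k(x,y)\,\mathrm{d}y\,\mathrm{d}x .
\]
Now $\mathcal{E}(w,\varphi)$ equals the left-hand side plus $2\iint_{A^c\times A}J(u(x)-m)(\varphi(x)-\varphi(y))k(x,y)\,\mathrm{d}y\,\mathrm{d}x$, and $|J(u(x)-m)|\le |J(u(x)-u(y))|$ on $A^c\times A$. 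So the $\varphi(y)$-part has the favourable sign and the $\varphi(x)$-part is absorbed, giving $\mathcal{E}(w,\varphi)\ge 0$.

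On the interior side, check that the estimate you quote is valid for supersolutions in $W^{s,p}_{\mathrm{loc}}\cap L^{p-1}_{sp}$ (not only $W^{s,p}(\mathbb{R}^n)$) and in the range $sp\ge n$; the paper needs an analogous modification there. If the estimate is available with average and infimum over $B_{R/2}$, no covering is needed, because $\essinf_{B_{R/2}}w\le \essinf_{B_{R/4}}w$.
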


The proofs of Theorems~\ref{thm-loc-bdd} and \ref{thm-WHI} are essentially contained in \cite[Theorems~3.5 and 3.7]{KLL23} in the range $sp \leq n$. The estimates in \cite[Theorems~3.5 and 3.7]{KLL23} are stated using weak extrema. However, the final step in both arguments is a Moser iteration, which yields the essential supremum and essential infimum appearing in the statements above.

Weak extrema nevertheless play an essential role in choosing the levels $M$ and $m$. They guarantee the admissibility of test functions in the proof of Caccioppoli estimates up to the boundary. Once these boundary levels have been fixed, the Moser iterations yield the usual measure-theoretic extrema. For completeness, we make the admissibility argument explicit below. We also explain the minor modification in the fractional Sobolev embedding step which allows both estimates to cover the supercritical case $sp > n$.

We first provide a simple localization lemma, which will be used to justify the admissibility of test functions for the Caccioppoli estimates up to the boundary.

\begin{lemma}\label{lem-test-fts}
For $x_0 \in \partial \Omega$, let $D$ and $T$ be given as in \eqref{eq-DT}. Suppose that $\eta\in C_c^\infty(B_R(x_0))$ is nonnegative and that $v \in Z^{s, p}_T(D)$. Let $F: \mathbb{R} \to \mathbb{R}$ be a Lipschitz function satisfying $F(0)=0$. Then
\begin{equation*}
\eta^p F(v) \in V^{s, p}_0(D).
\end{equation*}
\end{lemma}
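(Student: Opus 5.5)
The plan is an approximation argument that uses the definition of $Z^{s,p}_T(D)$ together with the continuity of Lipschitz superposition and of multiplication by $\eta^p$ in $V^{s,p}(D)$. By definition there exist $\phi_j \in \mathrm{Lip}_c(\mathbb{R}^n\setminus T)$ with $\phi_j \to v$ in $V^{s,p}(D)$. We will show that $\eta^p F(\phi_j)$ lies in $V^{s,p}_0(D)$ and that $\eta^p F(\phi_j)\to \eta^p F(v)$ in $V^{s,p}(D)$. Since $V^{s,p}_0(D)$ is closed in $V^{s,p}(D)$, this proves the lemma.

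\textbf{Step 1: the approximants lie in $V^{s,p}_0(D)$.} Set $\psi_j\coloneqq\eta^p F(\phi_j)$. It is Lipschitz, since $F$ is Lipschitz and $\phi_j,\eta$ are Lipschitz and bounded.
\begin{itemize}
\item Its support lies in $\supp\eta\cap\supp\phi_j$, because $F(0)=0$.
\item $\supp\phi_j$ is a compact subset of the open set $\mathbb{R}^n\setminus T$. Here I use that $T=B_R(x_0)\setminus\Omega$ is interpreted so that $\mathbb{R}^n\setminus T$ is open, or more robustly that $\phi_j$ vanishes on a neighbourhood of $T$.
\item $\supp\eta$ is a compact subset of $B_R(x_0)$.
\item Hence $\supp\psi_j\subset B_R(x_0)\cap(\mathbb{R}^n\setminus T)\subset \Omega\cap B_R(x_0)=D$.
\end{itemize}
So $\psi_j$ is compactly supported in $D$, as a compact subset of an open set with positive distance to its boundary. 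Therefore $\psi_j\in\mathrm{Lip}_c(D)\subset V^{s,p}_0(D)$ by the inclusion recorded in Section~\ref{sec-prelim}.

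\textbf{Step 2: convergence $\psi_j\to\eta^pF(v)$ in $V^{s,p}(D)$.} Write $w_j=F(\phi_j)$ and $w=F(v)$. Since $\eta^p$ is bounded and Lipschitz, the identity
\[
\eta^p(x)h(x)-\eta^p(y)h(y)=\eta^p(x)(h(x)-h(y))+h(y)(\eta^p(x)-\eta^p(y))
\]
gives
\[
[\eta^p h]_{V^{s,p}(D)}^p\lesssim[h]_{V^{s,p}(D)}^p+\int_D\int_{\mathbb{R}^n}|h(y)|^p\frac{\min\{1,|x-y|^p\}}{|x-y|^{n+sp}}\,dy\,dx .
\]
The second term involves $h$ on all of $\mathbb{R}^n$, not only on $D$. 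To avoid this, swap the roles of $x$ and $y$ and use that $\eta$ vanishes outside $B_R(x_0)$. I expect bounding this term to be the main technical point.

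The key observation is that convergence in $V^{s,p}(D)$ controls $h$ on $\mathbb{R}^n\setminus D$ only in an averaged sense. More precisely, for $y$ with $\operatorname{dist}(y,D)$ bounded, $\int_D|h(x)-h(y)|^p|x-y|^{-n-sp}dx$ is finite. Combined with $h\in L^p(D)$, this controls $\int_{B_{2R}(x_0)}|h|^p$, so $V^{s,p}(D)$ embeds into $L^p_{\mathrm{loc}}$ near $D$. This is because $|D|>0$ and $|x-y|\le C$ for $x\in D$, $y$ near $D$. Applying this bound to $h=w_j-w$ kills the second term in the limit.

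It remains to show $F(\phi_j)\to F(v)$ in $V^{s,p}(D)$, and this is the delicate point, since superposition by a merely Lipschitz $F$ need not be norm-continuous. I would handle it as follows.
\begin{itemize}
\item Pass to a subsequence with $\phi_j\to v$ a.e. and with an integrable dominating function for the difference quotients.
\item Use the pointwise bound $|F(a)-F(b)|\le \mathrm{Lip}(F)|a-b|$ to get boundedness of $\psi_j$ in $V^{s,p}(D)$.
\item Deduce weak convergence $\psi_j\rightharpoonup\eta^pF(v)$ in the reflexive space $V^{s,p}(D)$, identifying the limit through a.e. convergence.
\end{itemize}
Closed subspaces are weakly closed, so $\eta^pF(v)\in V^{s,p}_0(D)$. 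This route avoids needing strong convergence of the superposition altogether, so it is what I would do in practice. The same reasoning as in Step 1 then concludes the lemma.
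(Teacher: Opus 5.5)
Your proof is correct, but it finishes differently from the paper. Both arguments use the same approximants and the same support observation. Openness of $\mathbb{R}^n\setminus T$ is in fact never needed: $\supp\eta\cap\supp\phi_j$ is a closed subset of the compact set $\supp\eta$, and it lies in $B_R(x_0)\cap(\mathbb{R}^n\setminus T)=D$.

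The paper then asserts \emph{strong} convergence $\eta^pF(\phi_j)\to\eta^pF(v)$ in $V^{s,p}(D)$, citing continuity of Lipschitz composition and of multiplication by a fixed cutoff. It concludes from the closedness of $V^{s,p}_0(D)$. You replace strong convergence by three ingredients: boundedness plus a.e.\ convergence, reflexivity of $V^{s,p}(D)$ (valid since $1<p<\infty$), and weak closedness of the closed subspace $V^{s,p}_0(D)$.

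What each route buys:
\begin{itemize}
\item Your route needs no continuity statement for superposition operators. The price is reflexivity and an identification of the weak limit. Your local $L^p$ bound on $B_{2R}(x_0)$ supplies that identification, since all functions involved vanish off $\supp\eta$.
\item The paper's route yields the stronger fact that the approximants converge in norm.
\end{itemize}

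The obstacle you were avoiding is not real for $0<s<1$. The difference quotients of $F(\phi_j)$ are dominated pointwise by $\mathrm{Lip}(F)$ times those of $\phi_j$. The latter converge in $L^p(D\times\mathbb{R}^n)$, and all of them converge a.e.\ along a subsequence. The generalized dominated convergence theorem then gives $F(\phi_j)\to F(v)$ in $V^{s,p}(D)$, and the subsequence principle extends this to the whole sequence.

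Similarly, for $x\in D$ use the splitting
\[
\eta^p(x)h(x)-\eta^p(y)h(y)=\eta^p(y)\bigl(h(x)-h(y)\bigr)+h(x)\bigl(\eta^p(x)-\eta^p(y)\bigr).
\]
This bounds the cutoff term by $C\|h\|_{L^p(D)}^p$ directly, so the local embedding is not needed for that step.
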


\begin{proof}
By assumption, there exists a sequence $\{v_i\}_{i=1}^{\infty} \subset \mathrm{Lip}_c(\mathbb{R}^n \setminus T)$ such that
\begin{equation*}
v_i \to v \quad\text{in }V^{s, p}(D).
\end{equation*}
Define
\begin{equation*}
\varphi_i \coloneqq \eta^p F(v_i).
\end{equation*}
Since $F(0)=0$, we have
\begin{equation*}
\supp{F(v_i)} \subset \supp{v_i},
\end{equation*}
and hence
\begin{equation*}
\supp{\varphi_i} \subset \supp{\eta} \cap \supp{v_i} \subset B_R(x_0) \setminus T = D \subset \Omega.
\end{equation*}
Therefore, $\varphi_i \in \mathrm{Lip}_c(D) \subset V^{s, p}_0(D)$. By the continuity of Lipschitz composition and multiplication by a fixed cutoff in $V^{s, p}(D)$, we obtain
\begin{equation*}
\eta^p F(v_i) \to \eta^p F(v) \quad\text{in }V^{s, p}(D) \quad\text{as }i \to \infty.
\end{equation*}
Since $V^{s, p}_0(D)$ is closed in $V^{s, p}(D)$, we conclude that $\eta^p F(v) \in V^{s, p}_0(D)$.
\end{proof}

\begin{proof}[Complementary proof of Theorem~\ref{thm-loc-bdd}]
The proof of \cite[Theorem~3.5]{KLL23} is based on \cite[Lemmas~3.1 and 3.4]{KLL23}. In the proof of Lemma~3.1, the admissibility of a certain test function is implicit. We first seize the opportunity to make this explicit.

Under the additional assumption that $u$ is bounded, one uses the nonnegative test function
\begin{equation*}
\varphi \coloneqq (\bar{u}^\beta - \bar{M}^\beta) \eta^p,
\end{equation*}
where $\bar{u}=u_M^+ + d$, $\bar{M}=M+d$, and $0\leq \eta \in C^\infty_c(B_R(x_0))$, with constants $\beta, d>0$. We prove that $\varphi \in V^{s, p}_0(D)$.

By the exterior condition on $T$, we have
\begin{equation*}
(u_+-M)_+ \in Z^{s, p}_T(D).
\end{equation*}
Since
\begin{equation*}
u_M^+ = (u_+-M)_+ + M,
\end{equation*}
we have
\begin{equation*}
\bar{u} = (u_+-M)_+ + \bar{M}.
\end{equation*}
Define
\begin{equation*}
F(t) \coloneqq (t+\bar{M})^\beta - \bar{M}^\beta
\end{equation*}
on the range of $(u_+-M)_+$ on $\supp{\eta}$. Since $u$ is assumed to be bounded, this range is bounded. Thus, $F$ can be extended to a globally Lipschitz function on $\mathbb{R}$ satisfying $F(0)=0$. By Lemma~\ref{lem-test-fts},
\begin{equation*}
\varphi = F((u_+-M)_+) \eta^p \in V^{s, p}_0(D).
\end{equation*}
Moreover, since $\beta>0$ and $\bar{u} \geq \bar{M}$, we have $\varphi \geq 0$. Therefore, $\varphi$ is an admissible test function.

It remains to explain why the argument also covers the supercritical case $sp>n$. The only modification is in the Sobolev embedding step in the proof of \cite[Lemma~3.4]{KLL23}. Let $\sigma \in (0,s)$ and choose $q>1$ such that
\begin{equation*}
\frac{np}{n+\sigma p} < q < \min\{p, n/\sigma\}.
\end{equation*}
Then $\sigma q<n$ and $\chi \coloneqq q^\ast_\sigma/p>1$, where
\begin{equation*}
q^\ast_\sigma = \frac{nq}{n-\sigma q}.
\end{equation*}
With this choice of $\chi$, the proof of \cite[Lemma~3.4]{KLL23} goes through exactly as in the critical case $sp=n$, by applying the fractional Sobolev inequality with $q<n/\sigma$, and then using \cite[Lemma~4.6]{Coz17} together with H\"older's inequality.
\end{proof}

The proof of Theorem~\ref{thm-WHI} follows analogously from \cite[Theorem~3.7]{KLL23}, using the preceding localization argument to justify the Caccioppoli estimates up to the boundary, replacing the weak infimum by the essential infimum arising from the Moser iteration, and applying the same modification of the fractional Sobolev embedding when $sp>n$.

\section{Quantitative Wiener modulus}
\label{sec:Wiener}

In this section, we prove Theorem~\ref{thm-main} and Corollary~\ref{cor-main-2}. More precisely, we establish boundary estimates for solutions in terms of the exterior capacitary thickness and the modulus of continuity of the Dirichlet data.

The proof of Theorem~\ref{thm-main} is based on the local boundedness up to the boundary (Theorem~\ref{thm-loc-bdd}), the boundary capacitary estimate below, and a technical iteration lemma.

\begin{lemma}[Boundary capacitary estimate]\label{lem-cap-est}
For $x_0 \in \partial \Omega$, let $D$ and $T$ be given as in \eqref{eq-DT}. Let $u$ be a weak subsolution to $\mathcal{L}u=0$ in $D$ such that $u \geq 0$ a.e.\ in $B_R(x_0)$ and
\begin{equation}\label{eq-u-vanish-T}
u=0 \quad\text{on }T \text{ in the $V^{s,p}(D)$-sense}.
\end{equation}
For $\rho>0$, set
\begin{equation*}
M(\rho) \coloneqq \esssup_{B_\rho(x_0)}{u}.
\end{equation*}
If $M(R) > 0$, then
\begin{equation*}
\Theta_{x_0}(R/4) \leq C \frac{M(R)-M(R/4)+\mathrm{Tail}((M(R)-u)_-; x_0, R)}{M(R)},
\end{equation*}
where $C=C(n, s, p, \Lambda)>0$.
\end{lemma}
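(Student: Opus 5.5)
The idea is to turn the subsolution $u$ into a nonnegative supersolution by reflection, and then test its capacity against the condenser $(\overline{B_{R/4}(x_0)}\setminus\Omega, B_{R/2}(x_0))$. Set $v \coloneqq M(R) - u$. Then $v$ is a weak supersolution to $\mathcal{L}v=0$ in $D$ and $v\ge 0$ a.e.\ in $B_R(x_0)$. By \eqref{eq-u-vanish-T}, $v = M(R)$ on $T$ in the $V^{s,p}(D)$-sense, so $v \ge m \coloneqq M(R)$ on $T$. This puts us in the setting of the weak Harnack inequality (Theorem~\ref{thm-WHI}) with level $m = M(R)$. The truncation $v_m^- = \min\{v, M(R)\}$ equals $M(R)$ on $T$ and satisfies $v_m^-\ge M(R)-M(R/4)$ a.e.\ on $B_{R/4}(x_0)$. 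We also have $(v_m^-)_- = (M(R)-u)_-$ outside $B_R$, which is exactly the tail term in the statement.

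The key step is a capacitary lower bound. Consider $w \coloneqq v_m^-/M(R) \in [0,1]$ on $B_R$, which equals $1$ on $T$ in the weak sense. Take a cutoff $\eta\in C_c^\infty(B_{R/2}(x_0))$ with $\eta=1$ on $B_{R/4}(x_0)$ and $|\nabla\eta|\lesssim R^{-1}$. Then $\eta w$ is admissible, after approximation, for $\mathrm{cap}_{s,p}(\overline{B_{R/4}(x_0)}\setminus\Omega, B_{R/2}(x_0))$. Admissibility comes from $1-w\in Z^{s,p}_T(D)$; one needs the weak-sense version of $u\geq 1$ on $K$, and quasicontinuity or the argument of \cite{KLL23} gives this. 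Hence
\begin{equation*}
M(R)^p\,\mathrm{cap}_{s,p}(\overline{B_{R/4}}\setminus\Omega, B_{R/2}) \lesssim [\eta v_m^-]^p_{W^{s,p}(\mathbb{R}^n)}.
\end{equation*}
To bound the right-hand side I would use the standard energy estimate for supersolutions, i.e.\ the Caccioppoli inequality. Test the equation for $v$ with $\varphi=\eta^p(M(R)-v_m^-)$, which is nonnegative. It lies in $V^{s,p}_0(D)$ by Lemma~\ref{lem-test-fts}, since $M(R)-v_m^- = \min\{u, M(R)\}$ vanishes on $T$. Since $v_m^-\le M(R)$ and $v_m^-$ is the truncation, the test gives
\begin{equation*}
[\eta v_m^-]^p \lesssim M(R)\, R^{n-sp}\Bigl(\bigl(\textstyle\fint_{B_R} (v_m^-)^{p-1}\bigr)+ \mathrm{Tail}((v_m^-)_-;x_0,R)^{p-1}\Bigr)\cdot(\text{bounded factors}).
\end{equation*}
This is the usual "capacity $\lesssim M\cdot$(mean of $v$)$^{p-1}$" bound, analogous to \cite[Lemma 3.x]{KLL23}. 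In the local case this is the estimate $\int|\nabla (\eta v)|^p\lesssim M\int v^{p-1}|\nabla\eta|^p$.

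Finally, apply Theorem~\ref{thm-WHI} with exponent $t=p-1$, shrinking the radii suitably; $t=p-1$ is allowed since $p-1<n(p-1)/(n-sp)$. This gives
\begin{equation*}
\Bigl(\fint (v_m^-)^{p-1}\Bigr)^{1/(p-1)} \lesssim \essinf_{B_{R/4}} v_m^- + \mathrm{Tail}((M(R)-u)_-;x_0,R).
\end{equation*}
The essential infimum is $\le M(R)-M(R/4)$, because $\essinf_{B_{R/4}}(M(R)-u) = M(R)-M(R/4)$. Combining the two bounds and dividing by $M(R)^p R^{n-sp}$ yields
\begin{equation*}
\Theta_{x_0}(R/4)^{p-1} \lesssim \frac{\bigl(M(R)-M(R/4)+\mathrm{Tail}\bigr)^{p-1}}{M(R)^{p-1}},
\end{equation*}
which is the claim. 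Radii mismatches, e.g.\ Harnack on $B_{R/2}$ versus a cap on $B_{R/4}$, are handled by applying both results at comparable scales, since $\Theta$ and all quantities only change by constants.

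I expect the main obstacle to be the energy (Caccioppoli-type) bound with the correct power $M(R)\cdot(\text{mean})^{p-1}$ rather than $M(R)^p$. This requires the nonlocal cross terms to be controlled by the same $L^{p-1}$ mean and the tail of $(v_m^-)_-$. The admissibility of $\eta w$ in the condenser capacity, given only a weak boundary value on $T$, is also delicate.
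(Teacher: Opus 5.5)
Your reduction is the same as the paper's. You set $v=M(R)-u$ and apply Theorem~\ref{thm-WHI} at level $m=M(R)$ with $t=p-1$, using $\essinf_{B_{R/4}(x_0)}v=M(R)-M(R/4)$. You then need an energy bound for $\eta v/M(R)$ as a competitor for $\mathrm{cap}_{s,p}(\overline{B_{R/4}(x_0)}\setminus\Omega,B_{R/2}(x_0))$. Admissibility is not the delicate part: approximants of $u$ from $\mathrm{Lip}_c(\mathbb{R}^n\setminus T)$ vanish near this compact subset of $T$. The paper does not prove the energy bound itself; it imports it from Lemmas~4.1--4.2 of [KLL23].

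\textbf{The gap is in that energy bound.} The mechanism you propose for it fails, and it is the step that carries the lemma. On $B_R(x_0)$ we have $M(R)-v_m^-=u$, so testing with $\eta^p(M(R)-v_m^-)$ is exactly the Caccioppoli inequality for the subsolution $u$ with test function $\eta^p u$. It bounds $[\eta u]^p$ by $\iint u^p|\eta(x)-\eta(y)|^pk$ plus a tail. In the relevant regime, where $v$ is small, this is of size $M(R)^pR^{n-sp}$. It therefore gives only $\Theta_{x_0}(R/4)\lesssim 1$, which Lemma~\ref{lem-cap} already provides.

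If you rewrite the same inequality in terms of $v$, you see what is missing. Besides harmless terms like $M(R)R^{-sp}\int_{B_{R/2}}v^{p-1}$ and $M(R)R^{n-sp}\mathrm{Tail}(v_-;x_0,R)^{p-1}$, you are left with
\begin{equation*}
M(R)\int_{B_{R/2}}\int_{B_{R/2}}|v(x)-v(y)|^{p-2}(v(x)-v(y))\bigl(\eta^p(x)-\eta^p(y)\bigr)k(x,y)\,\mathrm{d}y\,\mathrm{d}x,
\end{equation*}
i.e.\ $M(R)$ times the Riesz mass of $v$ tested against $\eta^p$. The bound $|v(x)-v(y)|^{p-1}\le v(x)^{p-1}+v(y)^{p-1}$ works only when $sp<1$; otherwise $\int\min\{1,|x-y|/R\}|x-y|^{-n-sp}\,\mathrm{d}y=\infty$. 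Young's inequality against $[\eta v]^p$ just returns $M(R)^pR^{n-sp}$. Even in the local case, this test gives $\int\eta^p|\nabla v|^p\le pM\int\eta^{p-1}|\nabla v|^{p-1}|\nabla\eta|$, not $M\int v^{p-1}|\nabla\eta|^p$.

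\textbf{What you need} is a separate bound on this Riesz-mass term by $R^{n-sp}\bigl(\essinf_{B_{R/4}}v+\mathrm{Tail}(v_-;x_0,R)\bigr)^{p-1}$. One way to get it:
\begin{enumerate}[(i)]
\item Set $\bar v=\max\{v(x),v(y)\}+d$, with $d>0$ of the size of $\mathrm{Tail}(v_-;x_0,R)$. By H\"older's inequality, the double integral is at most $p$ times
\begin{equation*}
\Bigl(\iint\frac{|v(x)-v(y)|^p}{\bar v^{1+\gamma}}\max\{\eta(x),\eta(y)\}^p k\Bigr)^{\frac{p-1}{p}}\Bigl(\iint\bar v^{(1+\gamma)(p-1)}|\eta(x)-\eta(y)|^p k\Bigr)^{\frac{1}{p}}.
\end{equation*}
\item Control the first factor by a Caccioppoli inequality for $(v+d)^{(p-1-\gamma)/p}$. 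This comes from testing with $\eta^p[(v_m^-+d)^{-\gamma}-(M(R)+d)^{-\gamma}]$, which is admissible by Lemma~\ref{lem-test-fts}.
\item Control both resulting means by Theorem~\ref{thm-WHI} with $t=(1+\gamma)(p-1)$, which is allowed for small $\gamma>0$.
\end{enumerate}
The exponents combine to $\frac{(p-1)(p-1-\gamma)+(p-1)(1+\gamma)}{p}=p-1$, as required. Without such an argument, or an explicit appeal to the energy estimates of [KLL23], your proof reproduces only the trivial bound.
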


\begin{proof}
The proof follows the proof of \cite[Lemma~4.1]{KLL23}, with the weak extrema in the local estimates replaced by essential extrema.

By the assumption, the function $v\coloneqq M(R)-u$ is a weak supersolution to $\mathcal{L}v=0$ in $D$ such that $0\leq v \leq M(R)$ a.e.\ in $B_{R}(x_0)$. Moreover, it follows from \eqref{eq-u-vanish-T} that
\begin{equation*}
v=M(R) \quad\text{on }T \text{ in the $V^{s,p}(D)$-sense}.
\end{equation*}
We apply Theorem~\ref{thm-WHI} to $v$ with $t=p-1$, using $m=M(R) \geq 0$ as an admissible level. Note that $v_m^-=v$ a.e.\ in $B_R(x_0)$ and $(v_m^-)_-=v_-$ in $\mathbb{R}^n \setminus B_R(x_0)$. Hence
\begin{equation*}
\left( \fint_{B_{R/2}(x_0)} v^{p-1} \,\mathrm{d}x \right)^{1/(p-1)} \leq C \essinf_{B_{R/4}(x_0)} v + C\,\mathrm{Tail}(v_-; x_0, R)
\end{equation*}
for some $C=C(n, s, p, \Lambda)>0$. Since $\essinf_{B_{R/4}(x_0)}v=M(R) - M(R/4)$, the remainder of the proof is identical to the energy estimates in \cite[Lemmas~4.1 and 4.2]{KLL23}.
\end{proof}

Next, we provide the following technical lemma, which is used in the iteration argument.

\begin{lemma}\label{lem-iteration}
Let $N \in \mathbb{N}$, $a_1>0$, $a_2>0$, $0<q<1$, and $0<\Theta_{\ast} < 1/a_1$. Suppose that a nonincreasing sequence $\{X_i\}_{i=0}^{N}$ of nonnegative real numbers and sequences $\{\theta_i\}_{i=1}^N$ and $\{h_i\}_{i=1}^N$ of nonnegative real numbers satisfy
\begin{equation*}
\theta_i\leq\Theta_\ast, \quad i=1,\dots,N,
\end{equation*}
and
\begin{equation}\label{eq-technical}
X_i \leq (1-a_1\theta_i)X_{i-1}+a_2\sum_{m=1}^{i-1}q^{i-m}(X_{m-1}-X_m) +h_i+Bq^i, \quad i=1, \dots, N,
\end{equation}
for some $B \geq 0$. Then there exist constants $c>0$ and $C>0$, depending only on $a_1$, $a_2$, $q$, and $\Theta_{\ast}$, such that for any $i=1, \dots, N$,
\begin{equation*}
X_i \leq (X_0+CB)\exp\left(-c\sum_{m=1}^{i}\theta_m\right)+C\sum_{j=1}^ih_j\exp\left(-c\sum_{m=j+1}^{i}\theta_m\right),
\end{equation*}
with the convention that $\sum_{m=i+1}^i \theta_m=0$.
\end{lemma}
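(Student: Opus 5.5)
The plan is to absorb the nonlocal memory term $a_2\sum_{m<i}q^{i-m}(X_{m-1}-X_m)$ into a modified quantity that satisfies a clean one-step contraction. Then I iterate that contraction with the elementary bound $1-t\le e^{-t}$.

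\textbf{Step 1: recursive form of the memory term.} Set $D_m\coloneqq X_{m-1}-X_m\ge 0$; these are nonnegative because $\{X_i\}$ is nonincreasing. Define $S_0\coloneqq 0$ and $S_i\coloneqq\sum_{m=1}^i q^{i-m}D_m$, so that $S_i=qS_{i-1}+D_i$. The memory sum in \eqref{eq-technical} equals $qS_{i-1}$, and hence
\begin{equation*}
X_i\le(1-a_1\theta_i)X_{i-1}+a_2qS_{i-1}+h_i+Bq^i .
\end{equation*}

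\textbf{Step 2: a Lyapunov quantity.} For $\lambda\in(0,1)$ to be fixed, put $Y_i\coloneqq X_i+\lambda S_i$. Then $Y_0=X_0$ and $X_i\le Y_i$. Using $S_i=qS_{i-1}+X_{i-1}-X_i$ we rewrite
\begin{equation*}
Y_i=(1-\lambda)X_i+\lambda X_{i-1}+\lambda qS_{i-1}.
\end{equation*}
Inserting the bound for $X_i$ from Step 1, which is legitimate since $1-\lambda>0$, gives
\begin{equation*}
Y_i\le\bigl(1-(1-\lambda)a_1\theta_i\bigr)X_{i-1}+q\bigl(\lambda+(1-\lambda)a_2\bigr)S_{i-1}+h_i+Bq^i .
\end{equation*}
Now choose $\lambda$ close to $1$, depending only on $a_2$ and $q$, so that $q(\lambda+(1-\lambda)a_2)\le\frac{1+q}{2}\lambda$. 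This is possible because the ratio $q(\lambda+(1-\lambda)a_2)/\lambda$ tends to $q<1$ as $\lambda\to1$. Next choose $c>0$ small, depending only on $a_1,a_2,q,\Theta_\ast$, so that the following three conditions hold:
\begin{equation*}
c\le(1-\lambda)a_1,\qquad 1-c\Theta_\ast\ge\tfrac{1+q}{2},\qquad e^{c\Theta_\ast}q\le\tfrac{1+q}{2}.
\end{equation*}
Since $\theta_i\le\Theta_\ast$, the first two conditions give $1-(1-\lambda)a_1\theta_i\le 1-c\theta_i$ and $\frac{1+q}{2}\le 1-c\theta_i$. Hence
\begin{equation*}
Y_i\le(1-c\theta_i)Y_{i-1}+h_i+Bq^i\le e^{-c\theta_i}Y_{i-1}+h_i+Bq^i .
\end{equation*}

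\textbf{Step 3: iteration and the $B$ term.} Unrolling the recursion gives
\begin{equation*}
X_i\le Y_i\le X_0\,e^{-c\sum_{m=1}^i\theta_m}+\sum_{j=1}^i(h_j+Bq^j)\,e^{-c\sum_{m=j+1}^i\theta_m}.
\end{equation*}
The $h_j$ terms are already in the desired form. For the $B$ terms we write $e^{-c\sum_{m=j+1}^i\theta_m}=e^{-c\sum_{m=1}^i\theta_m}e^{c\sum_{m=1}^j\theta_m}$ and bound $e^{c\sum_{m=1}^j\theta_m}\le e^{c\Theta_\ast j}$. The third condition on $c$ then gives
\begin{equation*}
\sum_{j=1}^i Bq^j e^{c\Theta_\ast j}\le B\sum_{j\ge1}\Bigl(\tfrac{1+q}{2}\Bigr)^j=CB,
\end{equation*}
which yields the claimed estimate.

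The main obstacle is Step 2: choosing the weight $\lambda$ so that the memory term contracts by a factor strictly less than $1$, uniformly in $i$. The key observation is that $D_i$ enters $Y_i$ with the coefficient $-\lambda$ on $X_i$, which converts $\lambda S_i$ into a convex combination of $X_i$ and $X_{i-1}$ plus $\lambda qS_{i-1}$. The hypothesis $\Theta_\ast<1/a_1$ guarantees that the factors $1-a_1\theta_i$ are positive, so the recursion is meaningful, but it is not otherwise needed in this argument.
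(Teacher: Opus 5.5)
Your argument is correct and is essentially the paper's proof. You use the same memory variable (your $S_i$ is the paper's $H_i$) and the same Lyapunov quantity $Y_i=X_i+\lambda S_i$ with $\lambda=1-\delta$ close to $1$, and you rewrite $Y_i$ in the same way, as a convex combination of $X_{i-1}$ and $X_i$ plus $\lambda q S_{i-1}$, to get a one-step contraction. The one difference is the $B$-term. You shrink $c$ so that $q e^{c\Theta_\ast}<1$ and sum a geometric series using $\sum_{m\le j}\theta_m\le j\Theta_\ast$, whereas the paper splits the sum at $j_0\approx \frac{1}{2\Theta_\ast}\sum_{m\le i}\theta_m$; both reach the same bound, and yours is slightly more direct.
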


\begin{proof}
Define
    \begin{equation*}
        H_0=0, \quad H_i \coloneqq \sum_{m=1}^iq^{i-m}(X_{m-1}-X_m)=X_{i-1}-X_i+qH_{i-1}, \quad i=1,\dots, N.
    \end{equation*}
Since $\{X_i\}_{i=0}^N$ is nonincreasing, we have $H_i \geq 0$ for every $i=0, \dots, N$. For $0<\delta<1$ to be chosen below, we set
    \begin{equation*}
        Y_i \coloneqq X_i+(1-\delta)H_i, \quad i=0, 1, \dots, N.
    \end{equation*}
For every $i=1, \dots, N$, using \eqref{eq-technical}, we obtain
    \begin{equation*}
    \begin{aligned}
        Y_i&=X_i+(1-\delta)(X_{i-1}-X_i)+q(1-\delta)H_{i-1}\\
        &=(1-\delta)X_{i-1}+\delta X_i+q(1-\delta)H_{i-1}\\
        &\leq (1-\delta a_1 \theta_i)X_{i-1}+\delta h_i+\delta B q^i+q(1+\delta a_2-\delta)H_{i-1}.
    \end{aligned}
    \end{equation*}

    We now choose $\delta \in (0, 1)$ sufficiently small so that
    \begin{equation*}
        q(1+\delta a_2-\delta) \leq (1-\delta a_1\Theta_{\ast})(1-\delta),
    \end{equation*}
    which is possible since $q<1$. Note that $\delta$ depends only on $a_1$, $a_2$, $q$, and $\Theta_\ast$. Since $\theta_i \leq \Theta_\ast$, this choice gives
    \begin{equation*}
        Y_i \leq (1-\delta a_1 \theta_i)Y_{i-1}+\delta h_i+\delta Bq^i.
    \end{equation*}
    Iterating this recurrence yields, for every $i=1, \dots, N$,
    \begin{equation*}
        X_i \leq Y_i \leq X_0 \prod_{m=1}^i (1-\delta a_1 \theta_m)+\delta \sum_{j=1}^i h_j \prod_{m=j+1}^i(1-\delta a_1 \theta_m)+\delta B \sum_{j=1}^i q^j \prod_{m=j+1}^i(1-\delta a_1 \theta_m),
    \end{equation*}
    with the convention that $\prod_{m=i+1}^i (1-\delta a_1 \theta_m)=1$.
    For simplicity, we set
    \begin{equation*}
        S_i \coloneqq \sum_{m=1}^i\theta_m.
    \end{equation*}
Since $1-t \leq e^{-t}$, we have
    \begin{equation*}
        \prod_{m=1}^i(1-\delta a_1 \theta_m) \leq \exp\left(-\delta a_1 S_i\right)
    \end{equation*}
    and
    \begin{equation*}
        \prod_{m=j+1}^i(1-\delta a_1 \theta_m) \leq \exp\left(-\delta a_1 (S_i-S_j)\right).
    \end{equation*}
Therefore,
    \begin{equation*}
    X_i \leq X_0 \exp\left( -\delta a_1 S_i \right) + \delta \sum_{j=1}^{i} h_j\exp\left(-\delta a_1 (S_i-S_j)\right) + \delta B \sum_{j=1}^i q^j \exp\left(-\delta a_1 (S_i-S_j)\right).
    \end{equation*}
    
    It only remains to control the term containing $B$. We claim that there exist constants $c>0$ and $C>0$, depending only on $a_1$, $a_2$, $q$, and $\Theta_\ast$, such that
    \begin{equation*}
        \sum_{j=1}^i q^j \exp\left(-\delta a_1 (S_i-S_j)\right) \leq C \exp \left(-c S_i \right).
    \end{equation*}
    Indeed, choose $j_0 \in \mathbb N \cup \{0\}$ so that 
    \begin{equation*}
       \frac{S_i}{2\Theta_{\ast}}-1 < j_0 \leq \frac{S_i}{2\Theta_{\ast}}.
    \end{equation*}
    If $j \leq j_0$, then 
    \begin{equation*}
       S_j \leq j \Theta_{\ast} \leq  j_0 \Theta_{\ast} \leq \frac{1}{2}S_i,
    \end{equation*}
    and hence
    \begin{equation*}
        \sum_{j=1}^{j_0} q^j \exp\left(-\delta a_1 (S_i-S_j)\right) \leq \exp\left(-\frac{\delta a_1}{2} S_i\right) \sum_{j=1}^{\infty}q^j=\frac{q}{1-q}\exp\left(-\frac{\delta a_1}{2}S_i\right).
    \end{equation*}
    On the other hand, we have
    \begin{equation*}
        \sum_{j=j_0+1}^i q^j \exp\left(-\delta a_1 (S_i-S_j)\right) \leq \sum_{j=j_0+1}^{\infty} q^j = \frac{q^{j_0+1}}{1-q} \leq \frac{1}{1-q} \exp\left(-\frac{|\log q|}{2\Theta_{\ast}}S_i \right).
    \end{equation*}
    A combination of these two estimates proves the claim, completing the proof.
\end{proof}

We are now ready to prove Theorem~\ref{thm-main} by using Theorem~\ref{thm-loc-bdd} and Lemmas~\ref{lem-cap-est} and \ref{lem-iteration}.

\begin{proof}[Proof of Theorem~\ref{thm-main}]
We only prove \eqref{eq-main-modulus} with $|u-g(x_0)|$ replaced by $u-g(x_0)$. Applying the same argument to $-u$ gives the full estimate \eqref{eq-main-modulus}.

Without loss of generality, we may assume that $x_0=0$ and $g(0)=0$. We set $A \coloneqq \mathcal A_u(R; 0)$ and choose the radii $r_i \coloneqq 4^{-i-1}R$ for $i=0, 1, \dots$. We also set
\begin{equation*}
        l_i\coloneqq \omega_g(4r_i; 0), \quad
        v_i\coloneqq (u-l_i)_+, \quad
        \theta_i\coloneqq \Theta_0(r_i), \quad \text{and} \quad
       X_i\coloneqq \esssup_{B_{r_i}}u_+. 
\end{equation*}
Since $g(0)=0$, we have $g \leq l_i$ pointwise in $B_{4r_i} \setminus \Omega$. It is straightforward to check that $v_i$ is a weak subsolution to \eqref{eq-Lu=0} in $\Omega \cap B_R$ such that $0 \leq v_i \leq |u|$ in $\mathbb{R}^n$ and $v_i=0$ on $B_{4r_i} \setminus \Omega$ in the $V^{s, p}(\Omega \cap B_{4r_i})$-sense. Note that $\{X_i\}_{i=0}^{\infty}$ is a nonincreasing sequence of nonnegative real numbers.

\emph{Step 1}. We first control the initial value $X_0$. By Theorem~\ref{thm-loc-bdd} applied to $v_0$, there exists a constant $C=C(n, s, p, \Lambda)>0$ such that
    \begin{equation*}
        \esssup_{B_{R/4}}v_0 \leq \esssup_{B_{R/2}}v_0 \leq C \left(\fint_{B_R}v_0^p \,\mathrm{d}x \right)^{1/p} + \mathrm{Tail}(v_0; 0, R/2).
    \end{equation*}
    For the tail term, we observe that
    \begin{align*}
            \mathrm{Tail}^{p-1}(v_0; 0, R/2)
            &=\left(\frac{R}{2}\right)^{sp} \int_{B_R \setminus B_{R/2}} \frac{|v_0(x)|^{p-1}}{|x|^{n+sp}}\,\mathrm{d}x + 2^{-sp}\mathrm{Tail}^{p-1}(v_0; 0, R)\\
            &\leq \left(\frac{R}{2}\right)^{-n} \int_{\Omega \cap B_R} |u(x)|^{p-1}\,\mathrm{d}x + 2^{-sp}\mathrm{Tail}^{p-1}(u; 0, R)\\
            &\leq CA^{p-1},
    \end{align*}
    where we used the H\"older's inequality in the last estimate. 
    Therefore, a combination of the aforementioned estimates yields that
    \begin{equation*}
        \esssup_{B_{R/4}}{(u-l_0)_+} \leq CA
    \end{equation*}
    and so
    \begin{equation*}
        X_0 \leq l_0+CA.
    \end{equation*}

\emph{Step 2}. We next derive a capacitary estimate with the scale-dependent level $l_i$. For $i=1, 2,\dots$, we set
    \begin{equation*}
        M_i(\rho)=\esssup_{B_{\rho}}v_i \quad \text{and} \quad w_i=M_i(4r_i)-v_i.
    \end{equation*}
    Since $v_i$ is a nonnegative weak subsolution to \eqref{eq-Lu=0} in $\Omega \cap B_R$ such that $v_i = 0$ in $B_{4r_i} \setminus \Omega$ in the $V^{s, p}(\Omega \cap B_{4r_i})$-sense, it follows from the capacitary estimate (Lemma~\ref{lem-cap-est}) that
    \begin{equation*}
        c \theta_iM_i(4r_i) \leq M_i(4r_i)-M_i(r_i)+\mathrm{Tail}((w_i)_-; 0, 4r_i)
    \end{equation*}
    for some $c=c(n, s, p, \Lambda)>0$. By Lemma~\ref{lem-cap}, there exists $\Theta_{\ast}=\Theta_{\ast}(n, s, p)>0$ such that $\theta_i \leq \Theta_\ast$ for all $i$. We now choose $c_0 < \min\{ c, 1/\Theta_\ast\}$ so that
    \begin{equation*}
        M_i(r_i) \leq (1-c_0\theta_i)M_i(4r_i)+\mathrm{Tail}((w_i)_-; 0, 4r_i).
    \end{equation*}
    Since
    \begin{equation*}
        M_i(r_i)=(X_i-l_i)_+ \quad \text{and} \quad M_i(4r_i)=(X_{i-1}-l_i)_+,
    \end{equation*}
    we conclude that
    \begin{equation}\label{eq-capacity-recurrence}
        X_i \leq (1-c_0\theta_i)X_{i-1}+c_0\theta_il_i+\mathrm{Tail}((w_i)_-; 0, 4r_i)
    \end{equation}
    by considering the two cases $X_i>l_i$ and $X_i\leq l_i$ separately.

\emph{Step 3}. We now estimate the tail term in \eqref{eq-capacity-recurrence}. By repeating the tail decomposition argument presented in the proof of the sufficient part of \cite[Theorem~1.1]{KLL23}, we obtain that
    \begin{equation*}
        \begin{aligned}
            \mathrm{Tail}^{p-1}((w_i)_-; 0, 4r_i) &\leq C\left(\frac{r_i}{R}\right)^{sp}\mathrm{Tail}^{p-1}(u; 0, R)+(4r_i)^{sp} \int_{B_R \setminus B_{R/4}} \frac{(w_i)^{p-1}_-(y)}{|y|^{n+sp}}\,\mathrm{d}y\\
            &\quad+C\sum_{j=1}^{i-1}4^{-spj} (M_i(4^{j+1}r_i)-M_i(4r_i))^{p-1}.
        \end{aligned}
    \end{equation*}
     The only difference is that we can exploit the inequality $(w_{i})_- \leq |u|$ without the $|l_i|$-term, since we have $l_i\geq 0$. By employing a similar argument as in Step 1, we observe that
    \begin{equation*}  
        \begin{aligned}
             \int_{B_R \setminus B_{R/4}} \frac{(w_i)^{p-1}_-(y)}{|y|^{n+sp}}\,\mathrm{d}y&\leq\int_{(B_R\setminus B_{R/4})\setminus \Omega} \frac{|g|^{p-1}(y)}{|y|^{n+sp}}\,\mathrm{d}y+\int_{(B_R\setminus B_{R/4})  \cap \Omega} \frac{|u|^{p-1}(y)}{|y|^{n+sp}}\,\mathrm{d}y\\
             &\leq CR^{-sp}\left(l_0^{p-1}+R^{-\frac{n(p-1)}{p}}\|u\|_{L^p(B_R)}^{p-1} \right).
        \end{aligned}
    \end{equation*}
Combining these estimates, we obtain
     \begin{equation}\label{eq-tail-decomp}
        \mathrm{Tail}((w_i)_-; 0, 4r_i) \leq C4^{-\frac{sp}{p-1}i}(A+l_0)+C\sum_{j=1}^{i-1}2^{-\frac{sp}{p-1}j}(M_{i}(4^{j+1}r_i)-M_{i}(4r_i)).
    \end{equation}
    Moreover, since $t \mapsto (t-l_i)_+$ is nondecreasing and $1$-Lipschitz, we observe that
    \begin{equation*}
        M_i(4^{j+1}r_i)-M_i(4r_i)=M_i(r_{i-j-1})-M_i(r_{i-1}) \leq X_{i-j-1}-X_{i-1}.
    \end{equation*}
    We then substitute \eqref{eq-tail-decomp} into \eqref{eq-capacity-recurrence} to find 
    \begin{equation*}
         X_i \leq (1-c_0\theta_i)X_{i-1}+c_0\theta_il_i+C4^{-\frac{sp}{p-1}i}(A+l_0)+C\sum_{j=1}^{i-1}2^{-\frac{sp}{p-1}j}(X_{i-j-1}-X_{i-1}).
    \end{equation*}
    By noticing that
    \begin{equation*}
        \begin{aligned}
            \sum_{j=1}^{i-1}2^{-\frac{sp}{p-1}j}(X_{i-j-1}-X_{i-1})&=\sum_{j=1}^{i-1}2^{-\frac{sp}{p-1}j}\sum^{i-1}_{m=i-j}(X_{m-1}-X_{m})\\
            &=\sum_{m=1}^{i-1}\sum^{i-1}_{j=i-m}2^{-\frac{sp}{p-1}j}(X_{m-1}-X_{m})\\
            &\leq \frac{1}{1-2^{-\frac{sp}{p-1}}}\sum_{m=1}^{i-1}2^{-\frac{sp}{p-1}(i-m)}(X_{m-1}-X_m),
        \end{aligned}
    \end{equation*}
    we conclude that
    \begin{equation*}
        \begin{aligned}
              X_i &\leq (1-c_0\theta_i)X_{i-1}+C\sum_{m=1}^{i-1}2^{-\frac{sp}{p-1}(i-m)}(X_{m-1}-X_m)+c_0\theta_il_i+C4^{-\frac{sp}{p-1}i}(A+l_0)\\
              &\leq (1-c_0\theta_i)X_{i-1}+C\sum_{m=1}^{i-1}2^{-\frac{sp}{p-1}(i-m)}(X_{m-1}-X_m)+c_0\theta_il_i+C2^{-\frac{sp}{p-1}i}(A+l_0).
        \end{aligned}
    \end{equation*}

\emph{Step 4}. We are now able to apply Lemma~\ref{lem-iteration} with $a_1=c_0$, $a_2=C$, $q=2^{-\frac{sp}{p-1}}$, $h_i=c_0\theta_il_i$ and $B=C(A+l_0)$ to deduce that 
\begin{equation*}
    \begin{aligned}
         X_i &\leq C(X_0+A+l_0)\exp\left(-c\sum_{m=1}^{i}\theta_m\right)+C\sum_{k=1}^i c_0\theta_k l_k \exp\left(-c\sum_{m=k+1}^{i}\theta_m\right)\\
    &\leq  C(l_0+A)\exp\left(-c\sum_{m=1}^{i}\theta_m\right)+C \sum_{k=1}^i \theta_k l_k \exp\left(-c\sum_{m=k+1}^{i}\theta_m\right),
    \end{aligned}
\end{equation*}
where we used the fact that $X_0 \leq l_0+CA$ from Step 1. Equivalently, we may write
\begin{equation*}
    \begin{aligned}
        \esssup_{B_{4^{-i-1}R}}u_+ &\leq C(\omega_g(R; 0)+\mathcal{A}_u(R; 0))\exp\left(-c\sum_{m=1}^{i}\theta_m\right)\\
        &\qquad+C\sum_{k=1}^i \theta_k \omega_g(4^{-k}R; 0)\exp\left(-c\sum_{m=k+1}^{i}\theta_m\right).
    \end{aligned}
\end{equation*}
By a standard argument ((1) from discrete to continuous, (2) translation back and (3) $-u$ instead of $u$), we finish the proof.
\end{proof}

Finally, we provide the proof of Corollary~\ref{cor-main-2}.

\begin{proof}[Proof of Corollary~\ref{cor-main-2}]
We first observe that
\begin{equation*}
    \frac{\mathrm{d}}{\mathrm{d}\rho}\exp\left(-c \int_{4r}^{\rho} \Theta_{x_0}(\tau)\frac{\mathrm{d} \tau}{\tau}\right)=-c\exp\left(-c \int_{4r}^{\rho} \Theta_{x_0}(\tau)\frac{\mathrm{d} \tau}{\tau}\right) \Theta_{x_0}(\rho)\frac{1}{\rho}.
\end{equation*}
Then for $r<R/4$, we have
\begin{equation*}
    \begin{aligned}
        &\int_{4r}^R\exp\left(-c \int_{4r}^{\rho} \Theta_{x_0}(\tau)\frac{\mathrm{d} \tau}{\tau}\right) \omega_g(\rho; x_0)\Theta_{x_0}(\rho)\frac{\mathrm{d}\rho}{\rho}\\
        &\quad\leq \omega_g(R; x_0) \int_{4r}^{R}\exp\left(-c \int_{4r}^{\rho} \Theta_{x_0}(\tau)\frac{\mathrm{d} \tau}{\tau}\right) \Theta_{x_0}(\rho)\frac{\mathrm{d}\rho}{\rho}\\
        &\quad\leq \frac{1}{c}\omega_g(R; x_0).
    \end{aligned}
\end{equation*}
This estimate together with \eqref{eq-main-modulus} yields Corollary~\ref{cor-main-2}.
\end{proof}

\section{Capacity density condition}
\label{sec:CDC}

In this section, we prove Theorems~\ref{thm-CDC} and \ref{thm-holder}. The proof of Theorem~\ref{thm-holder} is a direct consequence of Theorem~\ref{thm-main}.

\begin{proof}[Proof of Theorem~\ref{thm-holder}]
    Applying Theorem~\ref{thm-main}, we obtain
    \begin{equation*}   
    \begin{aligned}
        \esssup_{\Omega\cap B_r(x_0)}|u-g(x_0)| &\leq CK_R\exp\left(-c \int_{4r}^R \Theta_{x_0}(\tau)\frac{\mathrm{d} \tau}{\tau}\right)\\
        &\qquad+C\int_{4r}^R\exp\left(-c \int_{4r}^{\rho} \Theta_{x_0}(\tau)\frac{\mathrm{d} \tau}{\tau}\right) \omega_g(\rho; x_0)\Theta_{x_0}(\rho)\frac{\mathrm{d}\rho}{\rho}.
    \end{aligned}
\end{equation*}
Due to the capacity density condition, we have 
\begin{equation*}
    \exp\left(-c \int_{4r}^{\rho} \Theta_{x_0}(\tau)\frac{\mathrm{d} \tau}{\tau}\right) \leq \left(\frac{4r}{\rho}\right)^{c\theta_0}.
\end{equation*}
Since $\Theta_{x_0} \leq C(n, s, p)$, we obtain
\begin{equation*}
     \begin{aligned}
        \esssup_{\Omega\cap B_r(x_0)}|u-g(x_0)| \leq CK_R\left(\frac{r}{R}\right)^{c\theta_0}+C\int_{4r}^R\left(\frac{r}{\rho}\right)^{c\theta_0} L\rho^{\alpha}\frac{\mathrm{d}\rho}{\rho}.
    \end{aligned}
\end{equation*}
The desired estimate follows from dividing cases depending on $\alpha$ and $c\theta_0$.
\end{proof}

We next prove Theorem~\ref{thm-CDC}. A main tool for the proof of the implication (\ref{eq-CDC-b})$\Rightarrow$(\ref{eq-CDC-a}) is the $\mathcal{L}$-potential; let $K \subset \Omega$ be compact and let $\psi \in C^\infty_c(\Omega)$ be such that $\psi=1$ on $K$. The unique weak solution $u \in C(\Omega \setminus K)$ to $\mathcal{L}u=0$ in $\Omega \setminus K$ such that $u-\psi \in V^{s, p}_0(\Omega\setminus K)$ is called the \emph{$\mathcal{L}$-potential} of $K$ in $\Omega$ and denoted by $\mathfrak{R}(K, \Omega)$.

\begin{proof}[Proof of Theorem~\ref{thm-CDC}]
The implication (\ref{eq-CDC-a})$\Rightarrow$(\ref{eq-CDC-b}) follows from Theorem~\ref{thm-holder}.

Let us now prove (\ref{eq-CDC-b})$\Rightarrow$(\ref{eq-CDC-a}).
We may assume that $x_0=0$. Fix $0<r<R$. Choose a radially non-increasing function $\eta \in C^\infty_c(B_1)$ such that $0 \leq \eta \leq 1$ in $\mathbb{R}^n$ and $\eta=1$ on $\overline{B_{1/2}}$. Let $\psi_r(x) \coloneqq \eta(x/r)$. Then $0 \leq \psi_r \leq \psi_{2r} \leq 1$, $\psi_r=1$ on $\overline{B_{r/2}}$, and $\psi_{2r}=1$ on $\overline{B_r}$.

Let $u$ be a weak solution to
\begin{equation*}
\left\{
\begin{aligned}
\mathcal{L}u &=0 &&\text{in }\Omega \cap B_r, \\
u &=\psi_r &&\text{in }\mathbb{R}^n \setminus (\Omega \cap B_r).
\end{aligned}
\right.
\end{equation*}
By the comparison principle, $0\leq u \leq 1$ a.e.\ in $\mathbb{R}^n$. Note that $u$ is a weak solution to \eqref{eq-loc-DP} with $g$ and $R$ replaced by $\psi_r$ and $r$, respectively. It thus follows from the assumption~\eqref{eq-CDC-b} that
\begin{equation*}
\esssup_{\Omega \cap B_{\varepsilon r}} |u(x)-\psi_r(0)| \leq C \varepsilon^\alpha \leq 1/2,
\end{equation*}
provided that $\varepsilon \in (0, 1/4)$ is sufficiently small. This in particular implies that
\begin{equation}\label{eq-u-large}
u \geq 1/2 \quad\text{a.e.\ in }\Omega \cap B_{\varepsilon r}.
\end{equation}

We now set $K\coloneqq \overline{B_r} \setminus \Omega$ and denote by $v=\mathfrak{R}(K, B_{2r})$ the $\mathcal{L}$-potential of $K$ in $B_{2r}$. We claim that
\begin{equation}\label{eq-CP}
u \leq v \quad\text{a.e.\ in } \Omega \cap B_r.
\end{equation}
By the comparison principle, it suffices to prove that
\begin{equation}\label{eq-u-v-test}
(u-v)_+ \in V^{s, p}_0(\Omega \cap B_r).
\end{equation}
Indeed, since $u-\psi_r \in V^{s, p}_0(\Omega \cap B_r) \subset V^{s, p}_0(B_r)$ and $\psi_r \in C^\infty_c(B_r) \subset V^{s, p}_0(B_r)$, we have $u \in V^{s, p}_0(B_r)$. Moreover, since $u, v \geq 0$ a.e.\ in $\mathbb{R}^n$, we have $0 \leq (u-v)_+ \leq u$ a.e.\ in $\mathbb{R}^n$. It follows from \cite[Corollary~2.9]{BBK24} that
\begin{equation*}
(u-v)_+ \in V^{s, p}_0(B_r).
\end{equation*}
On the other hand, since
\begin{align*}
u-\psi_r &\in V^{s, p}_0(\Omega \cap B_r) \subset V^{s, p}_0(\mathbb{R}^n \setminus K) \quad\text{and} \\
v-\psi_{2r} &\in V^{s, p}_0(B_{2r} \setminus K) \subset V^{s, p}_0(\mathbb{R}^n \setminus K),
\end{align*}
we obtain $(u-\psi_r)_+, (\psi_{2r}-v)_+ \in V^{s, p}_0(\mathbb{R}^n \setminus K)$. Since $\psi_r \leq \psi_{2r}$, we have
\begin{equation*}
0 \leq (u-v)_+ \leq (u-\psi_r)_+ + (\psi_r - \psi_{2r})_+ + (\psi_{2r} - v)_+ = (u-\psi_r)_+ + (\psi_{2r} - v)_+.
\end{equation*}
Therefore, again by \cite[Corollary~2.9]{BBK24},
\begin{equation*}
(u-v)_+ \in V^{s, p}_0(\mathbb{R}^n \setminus K).
\end{equation*}
An application of \cite[Lemma~2.8]{BBK24} proves \eqref{eq-u-v-test}, and the claim \eqref{eq-CP} follows.

Now, it follows from \eqref{eq-u-large} and \eqref{eq-CP} that $v \geq u \geq 1/2$ a.e.\ in $\Omega \cap B_{\varepsilon r}$. Also, note that $v = 1$ a.e.\ on $B_{\varepsilon r} \setminus \Omega$. Thus, $v \geq 1/2$ a.e.\ in $B_{\varepsilon r}$. We define the function
\begin{equation*}
w \coloneqq \min\{2v, 1\}.
\end{equation*}
After redefining $w$ on a set of measure zero if necessary, we have $0 \leq w \leq 1$ in $\mathbb{R}^n$ and $w=1$ in a neighborhood of $\overline{B_{\varepsilon r/2}}$. Thus, it follows from \cite[Proposition~6.2]{BBK24} that $w$ is admissible for the condenser capacity $\mathrm{cap}_{s, p}(\overline{B_{\varepsilon r/2}}, B_{2r})$. Therefore,
\begin{equation*}
\mathrm{cap}_{s, p}(\overline{B_{\varepsilon r/2}}, B_{2r}) \leq [w]^p_{W^{s, p}(\mathbb{R}^n)} \leq 2^p [v]_{W^{s, p}(\mathbb{R}^n)}^p.
\end{equation*}
By the minimizing property of the $\mathcal{L}$-potential (see \cite[Lemma~2.16(iii)]{KLL23}) and the ellipticity \eqref{eq-k} of the kernel,
\begin{equation*}
[v]_{W^{s, p}(\mathbb{R}^n)}^p \leq \Lambda \int_{\mathbb{R}^n} \int_{\mathbb{R}^n} |v(x)-v(y)|^p k(x, y) \,\mathrm{d}y \,\mathrm{d}x \leq \Lambda^2 \mathrm{cap}_{s, p}(K, B_{2r}).
\end{equation*}
On the other hand, since $\mathrm{cap}_{s, p}(\overline{B_{\varepsilon r/2}}, B_{2r}) \eqsim \mathrm{cap}_{s, p}(\overline{B_{r}}, B_{2r})$ by Lemma~\ref{lem-cap}, we obtain
\begin{equation*}
\mathrm{cap}_{s, p}(\overline{B_{r}}, B_{2r}) \leq C \mathrm{cap}_{s, p}(K, B_{2r})
\end{equation*}
for some $C=C(n, s, p, \varepsilon)>0$. Therefore, (\ref{eq-CDC-a}) holds.
\end{proof}

\section{Relations among the geometric conditions}
\label{sec:examples}

In this section, we illustrate the relations among the geometric conditions considered in this paper and prove Theorem~\ref{thm-CDC-comparison}.

For $0<s<1<p<\infty$, the following implications hold:
\begin{equation}\label{eq-hierarchy}
\eqref{eq-MDC} \implies \eqref{eq:CDC} \implies \eqref{eq-Wiener}.
\end{equation}
The first implication follows from the fractional Poincar\'e inequality\footnote{Suppose that \eqref{eq-MDC} holds. If $\varphi \in C^\infty_c(B_{2r}(x_0))$ is any function such that $\varphi \geq 1$ on $K \coloneqq \overline{B_r(x_0)} \setminus \Omega$, then by the fractional Poincar\'e inequality and Lemma~\ref{lem-cap}
\begin{equation*}
[\varphi]_{W^{s, p}(\mathbb{R}^n)}^p \geq c r^{-sp} \int_{B_{2r}(x_0)} |\varphi|^p \d x \geq cr^{-sp} |K| \geq c\eta r^{n-sp} \geq c(n, s, p, \eta) \mathrm{cap}_{s, p}(\overline{B_r(x_0)}, B_{2r}(x_0)).
\end{equation*}
Taking the infimum over all such $\varphi$ yields \eqref{eq:CDC}.}, whereas the second one follows immediately from the definition. Examples~\ref{ex-CDC-without-MDC} and \ref{ex-Wiener-without-CDC} show that both implications are strict.

\begin{example}[\eqref{eq:CDC} without \eqref{eq-MDC}]\label{ex-CDC-without-MDC}
Assume first that $sp < n$. Let $K \subset B_{1/2}$ be a compact Ahlfors $d$-regular set for some $n-sp<d<n$, i.e., there exists $C \geq 1$ such that
\begin{equation*}
C^{-1} r^d \leq \mathcal{H}^d(K \cap B_r(x)) \leq C r^d    
\end{equation*}
for every $x\in K$ and every $0<r<\mathrm{diam}\,K$. Let $\Omega \coloneqq B_1 \setminus K$. Since $d<n$, we have $|K|=0$, and hence
\begin{equation*}
|B_r(x_0) \setminus \Omega| = |K \cap B_r(x_0)| =0
\end{equation*}
for every $x_0 \in K$ and all sufficiently small $r>0$. Thus the measure density condition \eqref{eq-MDC} fails at every $x_0 \in K$.

On the other hand, by the trace theorem for $d$-sets \cite[Chapter~VII, Theorem~1]{JW84}, followed by the Sobolev--Lorentz embedding on Ahlfors regular sets \cite{Dyd10}, we have
\begin{equation*}
[\varphi]_{W^{s, p}(\mathbb{R}^n)}^p \geq c \left( \int_{K \cap B_r(x_0)} |\varphi|^q \,\mathrm{d}\mathcal{H}^d \right)^{p/q}
\end{equation*}
for every $\varphi \in C^\infty_c(B_{2r}(x_0))$, where $q=pd/(n-sp)$. If $\varphi \geq 1$ on $K \cap B_r(x_0)$, then the lower Ahlfors regularity gives
\begin{equation*}
[\varphi]_{W^{s, p}(\mathbb{R}^n)}^p \geq c \left( \mathcal{H}^d(K \cap B_r(x_0)) \right)^{p/q} \geq c r^{dp/q} = c r^{n-sp}.
\end{equation*}
Taking the infimum over all such $\varphi$ and using Lemma~\ref{lem-cap} yields
\begin{equation*}
\mathrm{cap}_{s, p}(\overline{B_r(x_0)} \setminus \Omega, B_{2r}(x_0)) = \mathrm{cap}_{s, p}(K \cap \overline{B_r(x_0)}, B_{2r}(x_0)) \geq c \, \mathrm{cap}_{s, p}(\overline{B_r(x_0)}, B_{2r}(x_0))
\end{equation*}
for sufficiently small $r$. Thus, $\Omega$ satisfies the \eqref{eq:CDC} at every $x_0 \in K$.

If $sp=n$, then we choose $\sigma \in (0,s)$ sufficiently close to $s$ so that $n-\sigma p<d$, and set $q_\sigma \coloneqq pd/(n-\sigma p)$. Applying the preceding subcritical trace estimate with $s$ replaced by $\sigma$, and then using
\begin{equation*}
[\varphi]_{W^{\sigma, p}(\mathbb{R}^n)}^p \leq C r^{(s-\sigma)p} [\varphi]_{W^{s, p}(\mathbb{R}^n)}^p \quad\text{for }\varphi \in C^{\infty}_c(B_{2r}(x_0)),
\end{equation*}
we obtain
\begin{equation*}
r^{n-\sigma p} \lesssim \left( \mathcal{H}^d(K \cap B_r(x_0)) \right)^{p/q_\sigma} \lesssim r^{(s-\sigma)p} [\varphi]_{W^{s, p}(\mathbb{R}^n)}^p.
\end{equation*}
As in the subcritical case, this yields the capacity density condition \eqref{eq:CDC} at every $x_0 \in K$.

Finally, if $sp>n$, then \eqref{eq:CDC} holds automatically at every boundary point. Indeed, for $x_0 \in \partial \Omega$,
\begin{equation*}
\mathrm{cap}_{s, p}(\overline{B_r(x_0)} \setminus \Omega, B_{2r}(x_0)) \geq \mathrm{cap}_{s, p}(\{x_0\}, B_{2r}(x_0)) \eqsim r^{n-sp}.
\end{equation*}
Thus the punctured ball $\Omega = B_1 \setminus \{0\}$ satisfies \eqref{eq:CDC} at the origin, whereas \eqref{eq-MDC} fails there since $|\overline{B_r} \setminus \Omega|=0$ for $0<r<1$.
\end{example}

\begin{example}[Wiener regularity without \eqref{eq:CDC}]\label{ex-Wiener-without-CDC}
We construct a domain with $0 \in \partial \Omega$ and $\Theta_0(r_i) \eqsim 1/i$ for all $i$ by choosing suitable radii $r_i$ and placing a small ball in each dyadic annulus; hence the \eqref{eq:CDC} fails, whereas the Wiener integral still diverges.

Assume first that $sp<n$. For $i=1, 2, \dots$, let
\begin{equation}\label{eq-radii}
r_i \coloneqq 2^{-i}, \quad x_i \coloneqq \frac{3}{4}r_i e_1, \quad \rho_i \coloneqq \frac{r_i}{16} i^{-\frac{p-1}{n-sp}},
\end{equation}
and set
\begin{equation*}
K_i \coloneqq \overline{B_{\rho_i}(x_i)}, \quad K \coloneqq \{0\} \cup \bigcup_{i=1}^\infty K_i, \quad \Omega \coloneqq B_1 \setminus K.
\end{equation*}
Then $K$ is compact, $0 \in \partial \Omega$, and $K_i \subset B_{r_i} \setminus \overline{B_{r_{i+1}}}$.

By Lemma~\ref{lem-cap}, the monotonicity and countable subadditivity of the condenser capacity (see \cite[Proposition~3.7]{BBK25}), and the fact that points have zero condenser capacity when $sp \leq n$ (see \cite[Lemma~5.5]{BBK24}), we have
\begin{equation*}
\mathrm{cap}_{s, p}(K \cap \overline{B_{r_i}}, B_{2r_i}) \eqsim \rho_i^{n-sp} \eqsim r_i^{n-sp} i^{-(p-1)}.
\end{equation*}
Indeed, the lower bound follows from $K_i \subset K \cap \overline{B_{r_i}}$, while the upper bound follows from
\begin{equation*}
\mathrm{cap}_{s, p}(K \cap \overline{B_{r_i}}, B_{2r_i}) \leq \sum_{j=i}^\infty \mathrm{cap}_{s, p}(K_j, B_{2r_i}) \lesssim \sum_{j=i}^\infty \rho_j^{n-sp} \lesssim \rho_i^{n-sp}.
\end{equation*}
Consequently, $\Theta_0(r_i) \eqsim 1/i \to 0$ as $i \to \infty$, and hence $\Omega$ does not satisfy the \eqref{eq:CDC} at the origin.

On the other hand, if $\frac{13}{16}r_i \leq r \leq r_i$, then $K_i \subset K \cap (\overline{B}_r \setminus B_{r/2})$, and hence $\Theta_0(r) \gtrsim 1/i$. Therefore,
\begin{equation*}
\int_0^{1/2} \Theta_0(r) \frac{\mathrm{d}r}{r} \gtrsim \sum_{i=1}^\infty \frac{1}{i} \int_{13r_i/16}^{r_i} \frac{\mathrm{d}r}{r} = \log \left( \frac{16}{13} \right) \sum_{i=1}^\infty \frac{1}{i} = \infty,
\end{equation*}
and so the Wiener condition \eqref{eq-Wiener} holds at the origin.

Next, we assume that $sp=n$. In this case, we let
\begin{equation*}
r_i \coloneqq \exp(-2^{i}) \quad x_i \coloneqq \frac{3}{4} r_i e_1, \quad \rho_i \coloneqq \frac{r_i}{16} \exp(-i),
\end{equation*}
instead of \eqref{eq-radii}. Then
\begin{equation*}
\mathrm{cap}_{s, p}(K \cap \overline{B_{r_i}}, B_{2r_i}) \geq \mathrm{cap}_{s, p}(K_i, B_{2r_i}) \eqsim \left( \log \frac{r_i}{\rho_i} \right)^{1-p} \eqsim i^{1-p}
\end{equation*}
and
\begin{align*}
\mathrm{cap}_{s, p}(K \cap \overline{B_{r_i}}, B_{2r_i})
&\leq \sum_{j=i}^\infty \mathrm{cap}_{s, p}(K_j, B_{2r_i}) \lesssim \sum_{j=i}^\infty \left( 2^j-2^i+j+\log 16 \right)^{1-p} \\
&\lesssim (i+\log 16)^{1-p} + \sum_{j=i+1}^\infty 2^{(1-p)j} \lesssim i^{1-p}.
\end{align*}
Thus, the Wiener condition \eqref{eq-Wiener} holds at the origin, even though the \eqref{eq:CDC} fails there.
\end{example}

The following example illustrates that even the weakest condition in the hierarchy \eqref{eq-hierarchy} may fail.

\begin{example}[Failure of the Wiener condition \eqref{eq-Wiener}]\label{ex-failure-Wiener}
As mentioned in Example~\ref{ex-CDC-without-MDC}, if $sp>n$, then \eqref{eq:CDC} holds and hence so does \eqref{eq-Wiener} at every boundary point.

When $sp \leq n$, boundary points with finite Wiener integral do occur. For example, the origin is an irregular boundary point of the punctured ball $B_1 \setminus \{0\}$, and its Wiener integral is zero. On the other hand, the capacitary spine constructed in \cite[Example~5.11]{BBK25} has the origin as an irregular boundary point with finite Wiener integral. (In the first example, the origin is semiregular, whereas in the second example it is strongly irregular; see \cite{BBK24} for the definitions.)
\end{example}

So far, we have considered geometric properties of the capacity density condition for fixed $(s,p)$. We next study how this condition depends on the parameters of the underlying operator and prove Theorem~\ref{thm-CDC-comparison}. Before the proof, we provide two auxiliary lemmas. The first one gives a characterization of the Sobolev capacity in terms of the Wolff potential.

\begin{lemma}\label{lem-dual-cap}
Let $0<s\leq 1<p$ and assume $sp \leq n$. If $K \subset \mathbb{R}^n$ is compact, then
\begin{equation*}
C_{s, p}(K) \eqsim \sup \left\{ \mu(K): \mu \in \mathcal{M}^+(K), {\bf W}^\mu_{s, p} \leq 1 \text{ on }\supp{\mu} \right\},
\end{equation*}
where
\begin{equation*}
{\bf W}^\mu_{s, p}(x) \coloneqq \int_0^1 \left( \frac{\mu(B_r(x))}{r^{n-sp}} \right)^{1/(p-1)} \frac{\mathrm{d}r}{r}
\end{equation*}
is the Wolff potential of $\mu$.
\end{lemma}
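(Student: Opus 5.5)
This lemma is the nonlocal/nonlinear counterpart of the classical Wolff--Hedberg duality. The plan is to derive it from known results for Bessel capacities rather than prove it from scratch. The main step is the identification of $C_{s,p}$ with the Bessel capacity $B_{s,p}$ on compact sets. For $0<s<1$ and $1<p<\infty$, the Besov space $B^s_{p,p}=W^{s,p}(\mathbb{R}^n)$ and the Bessel potential space $H^{s,p}$ differ when $p\neq2$. However, by the results of Adams--Hedberg (Chapter 4, comparison of capacities for Besov and Triebel--Lizorkin spaces, cf.\ \cite[Thm.~4.4.7]{AH96}), the capacities associated with $B^{s}_{p,p}$ and with $H^{s,p}$ are equivalent for $0<s<1$; when $p=2$ the spaces even coincide. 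For $s=1$, $C_{1,p}$ is directly comparable to the Bessel $(1,p)$-capacity.

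Granting this comparison, I would then invoke the Wolff inequality and the dual characterization of Bessel capacity (Hedberg--Wolff, \cite[Thms.~4.5.2 and 4.5.4]{AH96}). These give, for compact $K$ and $sp\le n$,
\begin{equation*}
B_{s,p}(K)\eqsim\sup\{\mu(K):\mu\in\mathcal M^+(K),\ \mathbf W^{\mu}_{s,p}\le1\text{ on }\supp\mu\}.
\end{equation*}
Here $\mathbf W$ is the truncated Wolff potential with upper limit $1$, which matches the inhomogeneous norm in $C_{s,p}$.

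The sketch of that duality is as follows. First, by minimax, the capacity equals $\sup\{\mu(K)^p/\|G_s*\mu\|^{p'}_{L^{p'}}\}$. Second, Wolff's inequality gives $\int (G_s*\mu)^{p'}\eqsim\int\mathbf W^\mu_{s,p}\,d\mu$. Third, a normalization argument restricts to measures with $\mathbf W^\mu\le 1$ on the support. For the bound $\gtrsim$, I would take the capacitary measure, which satisfies $\mathbf W\lesssim1$ on its support by the boundedness principle. For the bound $\lesssim$, given $\mu$ with $\mathbf W^\mu\le1$ on $\supp\mu$, the maximum principle for Wolff potentials gives $\mathbf W^\mu\le C$ everywhere. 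Combined with $\int\mathbf W^\mu\,d\mu\le\mu(K)$ and duality, this yields $\mu(K)\lesssim C_{s,p}(K)$.

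The main obstacle I expect is the comparison $C_{s,p}\eqsim B_{s,p}$ for $p\ne2$, since the underlying spaces differ. I would cite it rather than reprove it. If a self-contained route is preferred, the alternative is to prove Wolff's inequality directly in the Besov setting using the dyadic characterization $\|f\|_{B^s_{p,p}}^p\eqsim\sum_k 2^{ksp}\|\Delta_k f\|_p^p$. This is delicate, so citation is the plan. Finally, the condition $sp\le n$ is used only to ensure the nontrivial range where the Wolff formulation is standard; for $sp>n$, points have positive capacity anyway.
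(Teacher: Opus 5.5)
Your proposal is correct and is essentially the paper's argument. The paper likewise reduces to the Bessel capacity via its known comparability with $C_{s,p}$ (citing [BBK25, Remark~8.2]), and takes the dual characterization from Hedberg--Wolff, namely [HW83, Proposition~5] for the lower semicontinuous modified potential $\mathscr{W}^\mu_{s,p}\eqsim\mathbf{W}^\mu_{s,p}$, transferred to $\mathbf{W}^\mu_{s,p}$ by homogeneity in $\mu$.

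The only slip is in your optional sketch. The minimax formula needs $\|G_s*\mu\|_{L^{p'}}^{p}$ in the denominator, not $\|G_s*\mu\|_{L^{p'}}^{p'}$; as written it is not scale invariant. Once corrected, the bound $\mu(K)\lesssim C_{s,p}(K)$ follows from Wolff's inequality together with $\mathbf{W}^\mu_{s,p}\le 1$ on $\supp\mu$, with no maximum principle needed.
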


\begin{proof}
Let $\mathcal{C}_{s, p}$ denote the Bessel capacity corresponding to $s$ and $p$; see \cite[Definition~2.2.6]{AH96} for the definition of $\mathcal{C}_{s, p}$ (which is denoted by $C_{s, p}$ in \cite{AH96}). In \cite[Section~4]{HW83}, the authors introduce a lower semicontinuous modified Wolff potential $\mathscr{W}^\mu_{s, p}$, which satisfies
\begin{equation}\label{eq-W-comp}
\mathscr{W}^\mu_{s, p} \eqsim {\bf W}^\mu_{s, p};
\end{equation}
see \cite[p.~175]{HW83}. They also introduce the corresponding Wolff capacity $\mathscr{C}_{s, p}$ and show that
\begin{equation}\label{eq-cap-comp}
\mathscr{C}_{s, p} \eqsim \mathcal{C}_{s, p};
\end{equation}
see \cite[p.~176]{HW83}. Moreover, \cite[Proposition~5]{HW83} gives
\begin{equation*}
\mathscr{C}_{s, p} = \sup \left\{ \mu(K): \mu \in \mathcal{M}^+(K), \mathscr{W}^\mu_{s, p} \leq 1 \text{ on }\supp{\mu} \right\}.
\end{equation*}
Using \eqref{eq-W-comp}, \eqref{eq-cap-comp}, and the homogeneity of two Wolff potentials with respect to multiplication of the measure, we conclude that
\begin{equation*}
\mathcal{C}_{s, p} \eqsim \sup \left\{ \mu(K): \mu \in \mathcal{M}^+(K), {\bf W}^\mu_{s, p} \leq 1 \text{ on }\supp{\mu} \right\}.
\end{equation*}
Finally, the desired result follows from the comparability of the Bessel capacity $\mathcal{C}_{s, p}$ to the Sobolev capacity $C_{s, p}$ (see \cite[Remark~8.2]{BBK25}).
\end{proof}

\begin{lemma}\label{lem-cap-sp-comp}
Let $0<s_i\leq 1<p_i$ be such that $(s_1, p_1) \neq (s_2, p_2)$ and $s_ip_i \leq n$, $i=1, 2$. Suppose that \eqref{eq-sp-comp} holds. Then there exists $c>0$, depending only on $s_1$, $p_1$, $s_2$, $p_2$, and $n$, such that
\begin{equation}\label{eq-sp1-sp2}
C_{s_1, p_1}(K) \geq c\,C_{s_2, p_2}(K)
\end{equation}
for every compact set $K \subset \overline{B_1}$.
\end{lemma}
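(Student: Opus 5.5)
The plan is to use the Wolff potential characterization in Lemma~\ref{lem-dual-cap}. Under \eqref{eq-sp-comp}, take a measure $\mu \in \mathcal{M}^+(K)$ admissible for $C_{s_2,p_2}(K)$, i.e.\ ${\bf W}^\mu_{s_2,p_2}\le 1$ on $\supp\mu$. From it we build a measure admissible for $C_{s_1,p_1}(K)$ with comparable mass. Concretely, we aim to show
\begin{equation*}
{\bf W}^\mu_{s_1,p_1}(x) \le C\,\max\bigl\{{\bf W}^\mu_{s_2,p_2}(x),\, ({\bf W}^\mu_{s_2,p_2}(x))^{\gamma}\bigr\}
\end{equation*}
for some $\gamma>0$, at points where ${\bf W}^\mu_{s_2,p_2}(x)\le 1$. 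Then $\lambda\mu$ for a suitable small $\lambda=\lambda(n,s_i,p_i)$ is admissible for $(s_1,p_1)$, by the homogeneity ${\bf W}^{\lambda\mu}_{s,p}=\lambda^{1/(p-1)}{\bf W}^\mu_{s,p}$. Taking the supremum in Lemma~\ref{lem-dual-cap} gives \eqref{eq-sp1-sp2}. We also need $\mu(K)$ to be bounded, which holds since $K\subset\overline{B_1}$ and $C_{s_2,p_2}(K)\le C_{s_2,p_2}(\overline{B_1})<\infty$.

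The key step is a pointwise estimate of $\phi(r)\coloneqq \mu(B_r(x))/r^{n-s_2p_2}$ from ${\bf W}^\mu_{s_2,p_2}(x)\le 1$. Since $\mu(B_\tau(x))\ge \mu(B_r(x))$ for $\tau\in[r,2r]$, the integral over $[r,2r]$ gives $\phi(r)^{1/(p_2-1)}\lesssim {\bf W}^\mu_{s_2,p_2}(x)\le 1$ for $r\le 1/2$. Hence $\mu(B_r(x))\lesssim r^{n-s_2p_2}$, and for $r\in[1/2,1]$ we use $\mu(B_r(x))\le\mu(K)\lesssim 1$.

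We then split into the two cases of \eqref{eq-sp-comp}.

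\emph{Case $s_2p_2<s_1p_1$.} Write
\begin{equation*}
\frac{\mu(B_r)}{r^{n-s_1p_1}}=\phi(r)\,r^{s_1p_1-s_2p_2}.
\end{equation*}
Since $\phi\lesssim 1$, we get
\begin{equation*}
{\bf W}^\mu_{s_1,p_1}(x)\lesssim \int_0^1 r^{(s_1p_1-s_2p_2)/(p_1-1)}\,\frac{\mathrm{d}r}{r}<\infty,
\end{equation*}
uniformly in $x$. The bound does not even need to use the full potential.

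\emph{Case $s_1p_1=s_2p_2$, $p_1<p_2$.} Here $\phi$ is the same for both potentials, and we must compare $\int_0^1 \phi^{1/(p_1-1)}\,\mathrm{d}r/r$ with $\int_0^1\phi^{1/(p_2-1)}\,\mathrm{d}r/r$. Since $1/(p_1-1)>1/(p_2-1)$ and $\phi\le C$, we have
\begin{equation*}
\phi^{1/(p_1-1)} = \phi^{1/(p_2-1)}\,\phi^{1/(p_1-1)-1/(p_2-1)}\le C\,\phi^{1/(p_2-1)}.
\end{equation*}
Thus ${\bf W}^\mu_{s_1,p_1}\lesssim {\bf W}^\mu_{s_2,p_2}\le 1$ on $\supp\mu$.

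The main obstacle is bookkeeping rather than a conceptual difficulty. First, the admissibility condition is only imposed on $\supp\mu$, which is exactly where we need it, so this is fine. Second, the uniform bound $\phi\lesssim 1$ needs care near $r\sim 1$ and in the borderline $s_2p_2=n$, where $\phi(r)=\mu(B_r)$ and the estimate still holds. Third, Lemma~\ref{lem-dual-cap} is used with $s=1$ allowed, which is included in its statement. The final constant $c$ depends only on $n,s_i,p_i$ through the comparability constants in Lemma~\ref{lem-dual-cap} and the bound $C_{s_2,p_2}(\overline{B_1})$.
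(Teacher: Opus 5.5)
Your proposal is correct and follows the paper's proof essentially step by step. You prove the uniform bound $\mu(B_r(x))\lesssim r^{n-s_2p_2}$ on $\supp\mu$ (integrating over $[r,2r]$ for $r\le 1/2$, and using $\mu(K)\lesssim C_{s_2,p_2}(\overline{B_1})$ for $r\in[1/2,1]$), give the same two-case estimate of ${\bf W}^\mu_{s_1,p_1}$ under \eqref{eq-sp-comp}, and rescale $\mu$ by homogeneity before invoking Lemma~\ref{lem-dual-cap}. The bound you announce in terms of $\max\{{\bf W}^\mu_{s_2,p_2},({\bf W}^\mu_{s_2,p_2})^{\gamma}\}$ is not what you actually prove when $s_2p_2<s_1p_1$, where you get only a uniform constant; that is harmless, since a uniform bound on $\supp\mu$ is exactly what the argument needs.
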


\begin{proof}
By Lemma~\ref{lem-dual-cap}, there exists $\mu \in \mathcal{M}^+(K)$ such that
\begin{equation*}
\mu(K) \eqsim C_{s_2, p_2}(K) \quad\text{and}\quad {\bf W}^{\mu}_{s_2, p_2} \leq 1 \text{ on }\supp{\mu}.
\end{equation*}
We first observe that there exists a constant $C=C(n, s_2, p_2)>0$ such that
\begin{equation*}
\frac{\mu(B_r(x))}{r^{n-s_2p_2}} \leq C
\end{equation*}
for every $x \in \supp{\mu}$ and $0<r\leq 1$. Indeed, if $0<r\leq 1/2$, then
\begin{equation*}
\left( \frac{\mu(B_r(x))}{r^{n-s_2p_2}}\right)^{1/(p_2-1)}  \leq \int_r^{2r} \left( \frac{\mu(B_\rho(x))}{(\rho/2)^{n-s_2p_2}} \right)^{1/(p_2-1)} \frac{\mathrm{d}\rho}{\rho/2} \leq C{\bf W}^\mu_{s_2, p_2}(x) \leq C,
\end{equation*}
and if $1/2<r\leq 1$, then
\begin{equation*}
\frac{\mu(B_r(x))}{r^{n-s_2p_2}} \leq 2^{s_2p_2-n} \mu(K) \lesssim C_{s_2, p_2}(K) \leq C_{s_2, p_2}(\overline{B_1}) \leq C.
\end{equation*}

Next, we estimate ${\bf W}^\mu_{s_1, p_1}$ from above on $\supp{\mu}$. If $s_2p_2=s_1p_1$ and $p_1<p_2$, then it follows from
\begin{equation*}
\left( \frac{\mu(B_r(x))}{r^{n-s_1p_1}} \right)^{1/(p_1-1)} = \left( \frac{\mu(B_r(x))}{r^{n-s_2p_2}} \right)^{1/(p_1-1)} \leq C \left( \frac{\mu(B_r(x))}{r^{n-s_2p_2}} \right)^{1/(p_2-1)}
\end{equation*}
that
\begin{equation*}
{\bf W}^\mu_{s_1, p_1} \leq C {\bf W}^\mu_{s_2, p_2} \leq C \quad\text{on }\supp{\mu}.
\end{equation*}
If $s_2p_2<s_1p_1$, then we compute
\begin{equation*}
\left( \frac{\mu(B_r(x))}{r^{n-s_1p_1}} \right)^{1/(p_1-1)} = \left( \frac{\mu(B_r(x))}{r^{n-s_2p_2}} \right)^{1/(p_1-1)} r^{(s_1p_1 - s_2p_2)/(p_1-1)},
\end{equation*}
and deduce that
\begin{equation*}
{\bf W}^\mu_{s_1, p_1}(x) \leq C \int_0^1 r^{(s_1p_1-s_2p_2)/(p_1-1)} \frac{\mathrm{d}r}{r} \leq C.
\end{equation*}
In either case, we have
\begin{equation*}
{\bf W}^\mu_{s_1, p_1} \leq C_0 \quad\text{on }\supp{\mu}
\end{equation*}
for some $C_0=C_0(n, s_1, p_1, s_2, p_2)>0$.

Define $\tilde{\mu} \coloneqq C_0^{1-p_1}\mu$. Then
\begin{equation*}
\tilde{\mu}(K) = C_0^{1-p_1} \mu(K) \gtrsim C_{s_2, p_2}(K) \quad\text{and}\quad {\bf W}^{\tilde{\mu}}_{s_1, p_1} \leq 1 \text{ on }\supp{\mu}.
\end{equation*}
Therefore, \eqref{eq-sp1-sp2} follows from Lemma~\ref{lem-dual-cap}.
\end{proof}

\begin{proof}[Proof of Theorem~\ref{thm-CDC-comparison}]
First, we assume \eqref{eq-sp-comp} and prove \eqref{eq-inclusion}. Let $(\Omega, x_0, R) \in \Gamma_{s_2, p_2}$ and set
\begin{equation*}
K_r \coloneqq \frac{\overline{B_r(x_0)} \setminus \Omega - x_0}{r} \subset \overline{B_1}.
\end{equation*}
By scaling, we have from \eqref{eq:CDC} for $(s_2, p_2)$ that
\begin{equation*}
\mathrm{cap}_{s_2, p_2}(K_r, B_2) \geq c
\end{equation*}
for some $c>0$ independent of $r$. Moreover, it follows from Lemma~\ref{lem-cap-sp-comp} and \cite[Proposition~5.4]{BBK24} that
\begin{equation*}
\mathrm{cap}_{s_1, p_1}(K_r, B_2) \gtrsim C_{s_1, p_1}(K_r) \gtrsim C_{s_2, p_2}(K_r) \gtrsim \mathrm{cap}_{s_2, p_2}(K_r, B_2) \geq c.
\end{equation*}
Scaling back gives
\begin{equation*}
\mathrm{cap}_{s_1, p_1}(\overline{B_r(x_0)} \setminus \Omega, B_{2r}(x_0)) \geq c r^{n-s_1p_1}.
\end{equation*}
Thus, $(\Omega, x_0, R) \in \Gamma_{s_1, p_1}$. This proves $\Gamma_{s_2, p_2} \subset \Gamma_{s_1, p_1}$.

To show that the inclusion in \eqref{eq-inclusion} is strict, we use \cite[Theorem~8.3]{BBK25}, which shows that there exists a compact set $K \subset \mathbb{R}^n$ such that
\begin{equation*}
C_{s_2, p_2}(K) = 0 < C_{s_1, p_1}(K).
\end{equation*}
We may assume that $K \subset B_1 \setminus \overline{B_{1/2}}$ after a translation and dilation if necessary.

We set
\[
\Omega=B_1\setminus\left(\{0\}\cup\bigcup_{i=0}^\infty2^{-i}K\right).
\]
For $0<r<1/2$, choose $i \in \mathbb{N}$ such that $2^{-i} \leq r < 2^{-i+1}$. Using monotonicity and scaling, and then applying \cite[Proposition~5.4]{BBK24}, we obtain
\begin{align*}
\mathrm{cap}_{s_1, p_1}(\overline{B_r} \setminus \Omega, B_{2r})
&\geq \mathrm{cap}_{s_1, p_1}(2^{-i}K, B_{2^{-i+2}}) \\
&=2^{-i(n-s_1p_1)}\mathrm{cap}_{s_1, p_1}(K, B_4) \\
&\gtrsim r^{n-s_1p_1} C_{s_1, p_1}(K),
\end{align*}
which implies that $(\Omega, 0, 1/2) \in \Gamma_{s_1, p_1}$.

On the other hand, it follows from the countable subadditivity of the Sobolev capacity (see \cite[Proposition~3.7]{BBK25}), \cite[Proposition~5.4]{BBK24}, and scaling that
\begin{align*}
C_{s_2, p_2}(\overline{B_r} \setminus \Omega)
&\leq C_{s_2, p_2}(\{0\}) + \sum_{i=0}^\infty C_{s_2, p_2}(2^{-i}K) \\
&\leq C_{s_2, p_2}(\{0\}) + C \sum_{i=0}^{\infty} \mathrm{cap}_{s_2, p_2}(2^{-i}K, B_{2^{-i+2}}) \\
&= C_{s_2, p_2}(\{0\}) + C \sum_{i=0}^{\infty} 2^{-i(n-s_2p_2)} \mathrm{cap}_{s_2, p_2}(K, B_{4}) \\
&\leq C_{s_2, p_2}(\{0\}) + C C_{s_2, p_2}(K).
\end{align*}
By \cite[Lemma~3.4]{BBK25} and $C_{s_2, p_2}(K)=0$, we have $C_{s_2, p_2}(\overline{B_r} \setminus \Omega)=0$. Thus, by \cite[Proposition~5.4]{BBK24} again,
\begin{equation}\label{eq-cap-vanish}
    \mathrm{cap}_{s_2,p_2}(\overline{B_r} \setminus \Omega, B_{2r})=0 \quad \text{for every }0<r\leq 1/2.
\end{equation}
Therefore, $(\Omega, 0, 1/2) \notin \Gamma_{s_2, p_2}$, and \eqref{eq-inclusion} follows.

Next, we assume that \eqref{eq-sp-comp} does not hold. Then either
\begin{equation*}
s_2p_2=s_1p_1\text{ and }p_1>p_2, \quad\text{or} \quad s_2p_2 > s_1p_1.
\end{equation*}
By the preceding argument, there exists $(\Omega , x_0, R) \in \Gamma_{s_2, p_2} \setminus \Gamma_{s_1, p_1}$. Thus, the inclusion \eqref{eq-inclusion} fails to hold. This completes the proof.
\end{proof}

\begin{remark}
The example constructed in the proof of Theorem~\ref{thm-CDC-comparison} has a stronger property. In fact, it follows from \eqref{eq-cap-vanish} that
\begin{equation*}
\int_0^1 \left(\frac{\mathrm{cap}_{s_2, p_2}(\overline{B_{r}} \setminus \Omega, B_{2r})}{r^{n-s_2p_2}} \right)^{\frac{1}{p_2-1}} \frac{\mathrm{d}r}{r} = 0.
\end{equation*}
Therefore, the origin is not merely a failure point for \eqref{eq:CDC} with $(s_2, p_2)$, but also a failure point for the Wiener criterion \eqref{eq-Wiener} with $(s_2, p_2)$.
\end{remark}

\newcommand{\etalchar}[1]{$^{#1}$}


\end{document}